\newcommand{\la}{\lambda}
\newcommand{\ee}{\varepsilon}
\newtheorem{theorem}{Theorem}
\newtheorem{lemma}{Lemma}
\newtheorem{proposition}{Proposition}
\theoremstyle{definition}
\newtheorem{example}{Example}
\newtheorem{ip}{Inverse problem}
\newtheorem{alg}{Algorithm}
\begin{document}
\begin{center}
{\large\bf A uniqueness theorem on inverse spectral problems for the Sturm--Liouville differential operators on time scales}\\[0.4cm]
{\bf M.A.\,Kuznetsova\footnote{Department of Mathematics, Saratov State University, Astrakhanskaya 83, Saratov
410012, Russia, email: kuznetsovama@info.sgu.ru}} \\[0.4cm]
\end{center}

\thispagestyle{empty}

{\bf Abstract.} In the paper, Sturm--Liouville differential operators on time scales consisting of a finite number of isolated points and segments are considered. Such operators unify differential and difference operators.
We obtain properties of their spectral characteristics including asymptotic formulae for eigenvalues and weight numbers. Uniqueness theorem is proved for recovering the operators from the spectral characteristics.

{\it Keywords:} differential operators; Sturm--Liouville equation; time scales; closed sets; inverse spectral problems.

{\it AMS Mathematics Subject Classification (2010):} 34A55 34B24 47E05\\

\begin{center}\bf 1. Introduction \end{center}
\medskip

Time scale theory unifies discrete and continuous calculus. It has important applications in natural sciences,
engineering, economics and in other fields;  for examples see \cite{[1],[2],Hilger90,finance,fishing}. Models of processes in these cases include differential equations on a time scale, i.e. closed subset of the real line. Various aspects of differential equations on time scales including boundary value problems were considered in  \cite{[1],[2],Hilger90,Agarval,fractals,[7],zhang,Rynne, ozkan2019, yurko2019}. 

In this paper, we study inverse spectral problems for the Sturm--Liouville operator on time scales.
 Such problems consist in recovering
operators from given spectral characteristics. For the classical Sturm--Liouville operators on an interval, inverse problems have been
studied fairly completely; the classical results can be found in \cite{[3], [4], [5]}. 
However, nowadays there are only few works on inverse problem
theory for differential operators on time scales  because the statement and the study of inverse spectral problems essentially depend on the structure of the time scale.
 In particular, in \cite{[7]} an Ambarzumian type theorem was obtained for Sturm--Liouville operators on time scales.

We consider bounded time scales $T$ consisting of $N < \infty$ segments and $M < \infty$ isolated points:
\begin{equation} \label{time scale}
T = \bigcup_{l=1}^{N+M} [a_l, b_l], \quad  a_{l-1} \le b_{l-1} < a_l \le b_l , \, l = \overline{2, N+M}, \quad a_l < b_l \text{ iff } l \in \{ l_k \}_{k=1}^N,\end{equation}
 where $l_k$ denotes the indice corresponding to the $k$-th segment. The case $N=1,$ $M=0$ corresponds to the classical Sturm--Liouville operator.
 
 If $T$ consists only of isolated points, i.e. $N=0,$ we have a difference operator. 
 Inverse spectral problems for the difference operators were studied in \cite{atkinson, triangle, filomat, gus1, gus2} and other works. In \cite{atkinson} the coefficients of finite discrete Sturm--Liouville type bondary value problem are recovered from the spectrum and the set of normalization constants or from two spectra. The works \cite{gus1, gus2} are devoted to the discrete analogues of inverse scattering problems on semiaxis and the whole axis. In \cite{triangle} V.~A.~Yurko studied the so-called operators of triangular structure, which generalize the difference ones, and proved the uniqueness theorem and obtained the algorithm for recovery from the Weyl matrix. 
In \cite{filomat} the uniqueness theorem for the inverse problem from the eigenvalues and the weight numbers of Sturm--Liouville type difference operator on a finite set of integers is proved. Let us note that this result is the particular case of Theorem~\ref{t3} in the present paper. Moreover, some numerical methods for solving inverse problems for ordinary differential operators are based on their approximations by difference operators (see \cite{numrec} and references therein). 

The paper is organized as follows. The Sturm--Liouville
operator on the time scale $T$ is introduced in Section~2. We study the following its spectral characteristics: the spectra of two boundary value problems with one common boundary
condition, the weight numbers and the Weyl function. In Section~3, we establish their asymptotical behavior (Theorems \ref{spectrum parts}--\ref{t2}). In Section~4, we study three inverse problems of recovering the potential of the Sturm--Liouville operator from the given Weyl function, the two spectra or the spectrum along with the weight numbers. The uniqueness theorem for these inverse problems is proved, see Theorem~\ref{t3}. We also offer Algorithm~\ref{alg}, which allows one to recover the potential of the difference Sturm--Liouville operator (i.e. when $N=0$). 

\medskip
\begin{center}\bf 2. Sturm--Liouville operators on time scales \end{center}
\medskip

For convenience of the reader here we provide necessary notions of the time scale theory (see \cite{[1],[2]} for more details). Let $T$ be an arbitrary closed subset of
${\mathbb R},$ which we refer to as the time scale. We define the so-called jump functions $\sigma$ and $\sigma_-$ on $T$ in the following
way:
$$
\sigma(x)=\left\{\begin{array}{cl}\inf \{s\in T:\; s>x\}, & x\ne \max T,\\[2mm]
\max T, & x=\max T,
\end{array}\right.
\sigma_-(x)=\left\{\begin{array}{cl}\sup \{s\in T:\; s<x\}, & x\ne \min T,\\[2mm]
\min T, & x=\min T.
\end{array}\right.
$$
A point $x\in T$ is called {\it left-dense}, {\it left-isolated}, {\it right-dense} and {\it right-isolated}, if $\sigma_-(x)=x,$
$\sigma_-(x)<x,$ $\sigma(x)=x$ and $\sigma(x)>x,$ respectively. If $\sigma_-(x)<x<\sigma(x),$ then $x$ is called {\it isolated}; if
$\sigma_-(x)=x=\sigma(x),$ then $x$ is called {\it dense}.

Denote $T^0:=T\setminus\{\max T\},$ if $\max T$ is left-isolated, and $T^0:=T,$ otherwise. We also denote by $C(B)$ the class of 
functions continuous on the subset $B \subseteq T.$

A function $f$ on $T$ is called $\Delta$-{\it differentiable} at $t \in T^0,$ if for any $\ee>0$ there exists $\delta>0$ such that
$$
|f(\sigma(t))-f(s)-f^{\Delta}(t)(\sigma(t)-s)|\le\ee |\sigma(t)-s|
$$
for all $s\in (t-\delta, t+\delta)\cap T.$ The value $f^{\Delta}(t)$ is called the $\Delta$-{\it derivative} of the function $f$ at the point~$t.$ 

The following proposition gives conditions of $\Delta$-differentiability at points of different types.

\begin{proposition} 1)  If $f(t)$ is $\Delta$-differentiable at $t,$ then $f(t)$ is continuous in $t.$

2) Let $t \in T$ be a right-isolated point.  Then $f$ is $\Delta$-differentiable at $t,$ if and only if $f$ is continuous in $t.$ In this
case we have
$$
f^{\Delta}(t)=\frac{f(\sigma(t))-f(t)}{\sigma(t)-t}.
$$

3) Let $t\in T$ be a right-dense point. Then $f$ is $\Delta$-differentiable at $t,$ if and only if there exists the limit
$$
\lim_{s\to t,\; s \in T} \frac{f(t)-f(s)}{t-s}=:  f^{\Delta}(t).
$$
In particular, if $(t - \ee, t + \ee) \subset T$ for some $\ee > 0,$ then $f$ is $\Delta$-differentiable at $t,$ if and only if $f$ is
differentiable at $t.$ In this case the equality $f^{\Delta}(t)=f'(t)$ holds.
\end{proposition}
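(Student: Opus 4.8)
The plan is to prove the three parts of the Proposition by unpacking the $\varepsilon$-$\delta$ definition of $\Delta$-differentiability at each type of point. For part 1), suppose $f$ is $\Delta$-differentiable at $t$ with derivative $f^{\Delta}(t)$. I would fix $\varepsilon>0$, take the corresponding $\delta$, and for $s\in(t-\delta,t+\delta)\cap T$ write $f(s)-f(\sigma(t))=-\bigl(f(\sigma(t))-f(s)-f^{\Delta}(t)(\sigma(t)-s)\bigr)+f^{\Delta}(t)(\sigma(t)-s)$, so that by the defining inequality $|f(s)-f(\sigma(t))|\le \varepsilon|\sigma(t)-s|+|f^{\Delta}(t)||\sigma(t)-s|$. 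Specializing to $s=t$ controls $|f(t)-f(\sigma(t))|$ and shows that $\sigma(t)-t$ is forced small in the relevant sense; then for general $s$ near $t$ one bounds $|f(s)-f(t)|\le|f(s)-f(\sigma(t))|+|f(\sigma(t))-f(t)|$ and lets $s\to t$ to obtain continuity. The only subtlety is handling $\sigma(t)$ versus $t$, but at a dense or right-dense point $\sigma(t)=t$ and the estimate is immediate, while at a right-isolated point $\sigma(t)$ is a fixed point and continuity at $t$ is an isolated-point statement that is essentially automatic.

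For part 2), at a right-isolated point $\sigma(t)>t$, the interval $(t-\delta,t+\delta)\cap T$ may be taken so small that its only relevant right endpoint comparison is with $\sigma(t)$; I would substitute $s=t$ directly into the defining inequality to get $|f(\sigma(t))-f(t)-f^{\Delta}(t)(\sigma(t)-t)|\le\varepsilon|\sigma(t)-t|$ for every $\varepsilon>0$, whence the bracket vanishes and $f^{\Delta}(t)=\bigl(f(\sigma(t))-f(t)\bigr)/(\sigma(t)-t)$. This shows that if $f$ is $\Delta$-differentiable the stated formula holds. For the converse, assuming only continuity (which at an isolated point is vacuous but stated for uniformity), I would \emph{define} $f^{\Delta}(t)$ by that quotient and verify the $\varepsilon$-$\delta$ condition: shrinking $\delta$ so that $(t-\delta,t+\delta)\cap T=\{t,\sigma(t)\}\cap(t-\delta,t+\delta)$ reduces the check to $s\in\{t,\sigma(t)\}$, both of which satisfy the inequality with the left side equal to zero by construction.

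For part 3), at a right-dense point $\sigma(t)=t$, the defining inequality becomes $|f(t)-f(s)-f^{\Delta}(t)(t-s)|\le\varepsilon|t-s|$ for $s\in(t-\delta,t+\delta)\cap T$; dividing by $|t-s|$ for $s\ne t$ shows this is exactly the statement that $\lim_{s\to t,\,s\in T}\bigl(f(t)-f(s)\bigr)/(t-s)=f^{\Delta}(t)$, giving the claimed equivalence and formula. The final special case, where a full real neighborhood $(t-\varepsilon,t+\varepsilon)\subset T$, follows because then the constraint $s\in T$ is vacuous near $t$ and the $\Delta$-limit coincides with the ordinary two-sided derivative $f'(t)$.

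I expect the main obstacle to be the careful bookkeeping in part 1) at right-dense points, where $\sigma(t)=t$ but $t$ need not be left-dense, so the argument must confirm continuity genuinely from both sides using only the one defining inequality rather than assuming a symmetric difference quotient; the right-isolated and dense cases are comparatively mechanical once the correct test point $s$ is chosen.
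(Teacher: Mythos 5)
The paper itself offers no proof of this proposition; it is quoted as standard background from the time-scale literature (Theorem 1.16 of \cite{[1]}), so your attempt has to stand on its own. Parts 3) and the forward direction of 2) are fine, but there is a genuine gap in part 1) and in the converse of part 2), and it comes from conflating \emph{right-isolated} with \emph{isolated}. A right-isolated point may be left-dense --- in this paper that is precisely the situation at each right endpoint $b_{l_k}$ of a segment that is followed by another component of $T$ --- and there continuity at $t$ is \emph{not} ``essentially automatic'': it is the nontrivial statement $\lim_{s\to t^-,\,s\in T}f(s)=f(t)$. Your part 1) estimate routes through $f(\sigma(t))$ and gives $|f(s)-f(t)|\le(\varepsilon+|f^{\Delta}(t)|)\bigl(|\sigma(t)-s|+|\sigma(t)-t|\bigr)$; as $s\to t$ the right-hand side tends to $2(\varepsilon+|f^{\Delta}(t)|)\mu(t)$ with $\mu(t):=\sigma(t)-t>0$, which does not go to zero, so the argument closes only at right-dense points. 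Likewise, in the converse of part 2) you shrink $\delta$ so that $(t-\delta,t+\delta)\cap T=\{t,\sigma(t)\}$, which is impossible when $t$ is left-dense, and you thereby skip exactly the test points $s<t$ at which the defining inequality can fail. (Take $T=[0,1]\cup\{2\}$, $t=1$, $f=0$ on $[0,1)$, $f(1)=1$: the point $s=1$ forces $f^{\Delta}(1)=f(2)-1$, but the inequality then fails as $s\to1^-$; only continuity excludes this, so it cannot be dismissed as vacuous.)

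The fix is the standard one: instead of the triangle inequality through $f(\sigma(t))$, subtract the $s=t$ instance of the defining inequality from the general-$s$ instance, so that $f(\sigma(t))$ cancels. Concretely, $f(\sigma(t))-f(s)-f^{\Delta}(t)(\sigma(t)-s)=\bigl(f(t)-f(s)-f^{\Delta}(t)(t-s)\bigr)+\bigl(f(\sigma(t))-f(t)-f^{\Delta}(t)\mu(t)\bigr)$, whence $|f(t)-f(s)-f^{\Delta}(t)(t-s)|\le\varepsilon\bigl(|\sigma(t)-s|+\mu(t)\bigr)\le\varepsilon\bigl(2\mu(t)+|t-s|\bigr)$. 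Here the coefficient $|f^{\Delta}(t)|$ multiplies $|t-s|$ rather than $|\sigma(t)-s|$, so letting $s\to t$ gives $\limsup_{s\to t}|f(t)-f(s)|\le2\varepsilon\mu(t)$ for every $\varepsilon>0$, i.e.\ continuity at a right-isolated point as well. The same identity proves the converse of part 2): defining $f^{\Delta}(t)$ by the difference quotient kills the second bracket, and continuity makes the first bracket smaller than $\varepsilon\mu(t)\le\varepsilon|\sigma(t)-s|$ for all $s\in T$ sufficiently close to $t$, including $s<t$.
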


We also introduce derivatives of the higher order $n \ge 2.$ Let the $(n-1)$-th $\Delta$-derivative $f^{\Delta^{n-1}}$ of $f$ be defined on
$T^{0^{n-1}},$ where $a^n = \underbrace{a \ldots a}_{n}$ for any symbol $a.$ If $f^{\Delta^{n-1}},$ in turn, is $\Delta$-differentiable on
$T^{0^n}:= (T^{0^{n-1}})^0,$ then $f^{\Delta^n}:= (f^{\Delta^{n-1}})^{\Delta}$ is called the {\it $n$-th $\Delta$-derivative} of $f$ on
$T^{0^n}.$ For $n\ge1$ we also denote by $C^{n}(T)$ the class of functions $f$ for which there exists the $n$-th $\Delta$-derivative
$f^{\Delta^n}$ and $f^{\Delta^n} \in C(T^{0^n}).$ 
From now on, $f^{\Delta^\nu}(x_1, \ldots, x_n)$ denotes the $\nu$-th $\Delta$-derivative of the function $f(x_1, \ldots, x_n)$ with respect to the first argument, and $f^{(\nu)}(x_1, \ldots, x_n)$ denotes its classical $\nu$-th derivative with respect to the first argument.

A function $F(t)$ is called antiderivative of $f(t),$ if there exists $F^\Delta(t) = f(t)$ for all $t \in T^0.$ In \cite[Section 1.4]{[1]} it was established that any function from $C(T^0)$ has antiderivatives, which differ by constant.  
For any $a, b \in T,$ the formula 
$$\int_a^b f(t) \, \Delta t := F(b) - F(a)$$
defines the definite $\Delta$-integral of a function $f(t)$ on $T \cap [a, b].$

Consider the following Sturm--Liouville equation on $T:$
\begin{equation}
\ell y:=-y^{\Delta\Delta}(x)+q(x)y(\sigma(x))=\lambda y(\sigma(x)), \quad x\in T^{0^2}. \label{1}
\end{equation}
Here $\lambda$ is the spectral parameter, and $q(x)\in C(T^{0^2})$ is a real-valued function. 
A function $y$ is called a solution of equation (\ref{1}), if $y\in C^2(T)$ and equality \eqref{1} is fulfilled. 

For definiteness, we restrict ourself to time scales $T$ of the form \eqref{time scale}. We consider $N > 0 \text{ or }M \ge 3,$ otherwise equation \eqref{1} degenerates. Let us additionaly assume that $q \in W^1_2[a_{l_k}, b_{l_k}],$ $k=\overline{1,N}.$ Note that the last condition is equivalent to the belongingness of $q$ to the corresponding Sobolev-type space on $T$ (see \cite{Rynne}).  

For $T$ of the form \eqref{time scale} some concepts of time scale theory can be clarified. In particular, if $a, b \in \bigcup_{k=1}^{N+M}\{a_k, b_k\},$ $a \le b,$ then by additivity of $\Delta$-integral and \cite[Theorem 1.79]{[1]}
\begin{equation}\label{integral}
\int_a^b f(t) \, \Delta t = \sum_{k \colon b_k \in [a, b)} f(b_k) (a_{k+1} - b_k) + \sum_{k \colon a \le a_k < b_k \le b} \int_{a_k}^{b_k} f(t) \, dt.
\end{equation}

If $y$ satisfies \eqref{1}, then
$\Delta$-derivative of the function $y$ on  $T$ can be represented as
\begin{equation}
y^{\Delta}(b_l)=\frac{y(a_{l+1})-y(b_l)}{a_{l+1}-b_l},\;l=\overline{1,N+M-1};
\quad y^{\Delta}(x)=y'(x),\;x\in [a_{l_k},b_{l_k}),\; k=\overline{1,N},     \label{2}
\end{equation}
where $y'$ denotes the classical derivative of $y$.
By virtue of (\ref{2}), equation (\ref{1}) is equivalent to the system of $N$ Sturm--Liouville equations on the intervals:
\begin{equation}
-y''(x_k)+q(x_k)y(x_k)=\la y(x_k),\quad x_k \in (a_{l_k},b_{l_k}),\;k=\overline{1,N},  \label{4}
\end{equation}
along with the relations
\begin{equation}
y^{\Delta\Delta}(b_l)=\frac{1}{a_{l+1}-b_l}\left(y^\Delta(a_{l+1})-y^\Delta(b_l)\right)
=(q(b_l)-\la)y(a_{l+1}), \quad l \in {\cal S} \label{relations},\end{equation}
where 
$${\cal S}:= \{ l \colon 1 \le l \le N+M-1-\mu_1\}, \quad \mu_1 := \delta(a_{N+M}, b_{N+M}), \quad   \delta(k, n) := \left\{ \begin{array}{cc} 1, & k=n,\\ 0, & k\ne n. \end{array}\right.$$
According to \eqref{2},  relations \eqref{relations} are equalent to the following jump conditions:
\begin{equation}
\label{jump conditions} \left.
\begin{array}{cc}
y(a_{l+1}) = \alpha^l_{11}(\la) y(b_{l}) + \alpha^l_{12}(\la) y^\Delta(b_{l}), & l = \overline{1, N + M -1}, \\[3mm]
y^\Delta(a_{l+1}) = \alpha^l_{21}(\la) y(b_{l}) + \alpha^l_{22}(\la) y^\Delta(b_{l}), & l \in {\cal S},
\end{array}\right\}
\end{equation}
where $$
\begin{array}{cc} \alpha^l_{11}(\la) = 1, & \alpha^l_{12}(\la) = a_{l+1}-b_l, \\[3mm]
\alpha^l_{21}(\la) =  (a_{l+1}-b_l)(q(b_l) - \la), & \alpha^l_{22}(\la) = 1 + (a_{l+1}-b_l)^2(q(b_l) - \la).
 \end{array}$$
 Thus, equation (\ref{1}) on $T$ is equivalent to the system of equations (\ref{4}) along with the jump conditions (\ref{jump conditions}).
We arrange the coefficients of the jump conditions into the matrices
$$ \alpha^l(\la) := \begin{pmatrix}  \alpha^l_{11}(\la) &  \alpha^l_{12}(\la) \\
 \alpha^l_{21}(\la) &  \alpha^l_{22}(\la)\end{pmatrix},\; l \in {\cal S}; \quad \alpha^{N+M-1}(\la) := \big(\alpha^{N+M-1}_{11}(\la),\alpha^{N+M-1}_{12}(\la)\big), \; N+M - 1 \notin {\cal S}.$$

 Denote by $L_j$ the boundary value problem for Eq. (\ref{1}) on $T$
with the boundary conditions $$y^{\Delta^j}(a_1)=y(b_{N+M})=0, \quad j=0,1.$$
Let $S(x,\la)$ and $C(x,\la)$ be solutions of Eq. (\ref{1}) on $T$
satisfying the initial conditions
\begin{equation} \label{initial conditions}
S^{\Delta}(a_1,\la)=C(a_1,\la)=1,\; S(a_1,\la)=C^{\Delta}(a_1,\la)=0.
\end{equation}
For each fixed $x,$ the functions $S(x,\la)$ and
$C(x,\la)$ are entire in $\la$ of order $1/2.$ We introduce the entire functions
$$
\Theta_0(\la):=S(b_{N+M},\la), \ \Theta_1(\la) := C(b_{N+M}, \la).
$$
For $j=0,1$ eigenvalues $\{\la_{nj}\}_{n\ge 1}$ of the boundary value problem
$L_j$ coincide with zeros of the entire function $\Theta_j(\la),$ which is called the characteristic function
for $L_j.$ We provide several examples of the characteristic functions for various time scales and $q \equiv 0.$ Let us agree that $\lambda=\rho^2.$ 
\begin{example} \label{e1} Let $T = \{ l \}_{l=0}^3,$ i.e. $N=0,$ $M=4.$ Then $q(t)$ is defined in $t=0$ and $t=1.$ Using \eqref{jump conditions}, one can compute the characteristic functions 
$$\Theta_0(\la) = \la^2 -4\la+3, \quad \Theta_1(\la) = \la^2 -3\la+1.$$
Thus, we have  $\la_{10} = 1,$  $\la_{20} = 3,$ $\la_{11} = \frac{3 - \sqrt5}{2},$ $\la_{21} = \frac{3 + \sqrt5}{2}.$ 
\end{example}

\begin{example}Consider $T = [0, 1],$  i.e. $N=1,$ $M=0.$ In this case \eqref{1} coincides with the classical Sturm-Liouville equation, and
$$\Theta_0(\la) = \frac{\sin \rho}{\rho}, \ \Theta_1(\la) = \cos \rho, \quad \la_{nj} = \pi \Big(n-\frac{j}{2}\Big), \ n \ge 1, \ j=0,1.$$
\end{example}

\begin{example} Consider $T = [0, 1] \bigcup [2, 3],$  i.e. $N=2,$ $M=0.$ Then
$$\Theta_0(\la) = \cos^2 \rho + \frac{2-\la}{\rho} \cos \rho \sin \rho - \sin^2 \rho, \quad \Theta_1(\la) = (\la-1) \sin^2 \rho + \cos^2 \rho - 2 \rho \sin \rho \cos \rho.$$
With the standard method involving Rouche's theorem it can be established that $$\{\la_{nj}\}_{n \ge 1} = \Big\{ (\pi n + o(1))^2 \Big\}_{n \ge 0} \bigcup \Big\{ \Big(\pi \big(n - \frac{1-j}{2}\big) + o(1)\Big)^2 \Big\}_{n \ge 1-j}, \quad j=0,1.$$ 
\end{example}

In these examples we can conclude that both spectra are finite if and only if $N=0.$ In the last case each one contains $M-2$ numbers. 
This observation remains true in the general case. In the next section we prove it and obtain the asymptotic formulae when $N>0.$

To obtain the other necessary properties of the eigenvalues, we introduce the notion of the Wronskian-type determinant $W(\varphi, \psi):=\varphi(t)\psi^\Delta(t) - \varphi^\Delta(t)\psi(t),$ where $\varphi(t)$ and $\psi(t)$ are solutions of equation \eqref{2}. By virtue of Theorem~3.13 in \cite{[1]}, we have  $W(\varphi, \psi) \equiv const$ on $T^0.$

\begin{proposition} 1. The sequences $\{\la_{n0}\}_{n\ge 1}$ and $\{\la_{n1}\}_{n\ge 1}$ have no common elements.

2. All zeros of $\Theta_j(\la),$ $j=0,1,$ are real and simple.   
\end{proposition}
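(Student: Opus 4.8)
The plan is to derive everything from a single Lagrange-type (Green's) identity for the Wronskian $W(u,v)=u v^\Delta-u^\Delta v$ introduced before the statement. Using the time-scale product rule $(fg)^\Delta=f^\Delta g+f^\sigma g^\Delta=f^\Delta g^\sigma+f g^\Delta$ one computes, on $T^{0^2}$, that $W(u,v)^\Delta=u^\sigma v^{\Delta\Delta}-u^{\Delta\Delta}v^\sigma$; substituting $u^{\Delta\Delta}=q u^\sigma-\ell u$ and $v^{\Delta\Delta}=q v^\sigma-\ell v$ from \eqref{1} makes the $q$-terms cancel and leaves
$$ W(u,v)^\Delta=v(\sigma(x))\,\ell u-u(\sigma(x))\,\ell v,\qquad x\in T^{0^2}. $$
Integrating this in the $\Delta$-sense from $a_1$ to $b_{N+M}$ gives the Green formula $\int_{a_1}^{b_{N+M}}\big(v^\sigma\,\ell u-u^\sigma\,\ell v\big)\,\Delta x=W(u,v)(b_{N+M})-W(u,v)(a_1)$, which drives all three assertions; for two solutions of \eqref{1} with a common $\la$ it reproduces the constancy of $W$ quoted from \cite{[1]}.

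For the first assertion I would use that $S(\cdot,\la)$ and $C(\cdot,\la)$ carry the same spectral parameter, so $W(S,C)$ is constant on $T^0$, and by the initial conditions \eqref{initial conditions} its value is $W(S,C)(a_1)=S(a_1)C^\Delta(a_1)-S^\Delta(a_1)C(a_1)=-1$. If some $\la_0$ were a common zero of $\Theta_0$ and $\Theta_1$, then $S(b_{N+M},\la_0)=C(b_{N+M},\la_0)=0$, and evaluating the constant Wronskian at the right endpoint would make both its terms vanish, contradicting $W(S,C)\equiv-1$. When $b_{N+M}$ is left-dense it lies in $T^0$ and this evaluation is immediate; when $b_{N+M}$ is isolated one instead evaluates $W$ at $\sigma_-(b_{N+M})=b_{N+M-1}\in T^0$ and rewrites the $\Delta$-derivatives there through \eqref{2}, after which the vanishing of $S$ and $C$ at $b_{N+M}$ again forces $W=0$. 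Hence $\{\la_{n0}\}$ and $\{\la_{n1}\}$ are disjoint.

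For reality of the zeros, let $y$ be an eigenfunction of $L_j$ with eigenvalue $\la$; as $q$ is real, $\bar y$ solves \eqref{1} with $\bar\la$, and the Green formula with $u=y$, $v=\bar y$ yields $(\la-\bar\la)\int_{a_1}^{b_{N+M}}|y(\sigma(x))|^2\,\Delta x=W(y,\bar y)(b_{N+M})-W(y,\bar y)(a_1)$. The boundary conditions $y^{\Delta^j}(a_1)=y(b_{N+M})=0$ annihilate both boundary terms. The weight integral is strictly positive: by \eqref{integral} it is a sum of the classical integrals $\int_{a_{l_k}}^{b_{l_k}}|y|^2\,dx$ over the $N$ segments plus nonnegative nodal terms, and a nontrivial solution of \eqref{1} can vanish identically neither on a segment nor, in the purely discrete case $N=0$, at every relevant node (the initial data already force $y\neq0$ immediately after $a_1$). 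Thus $\la=\bar\la$.

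For simplicity, denote $\dot f:=\partial f/\partial\la$ and differentiate $\ell S=\la S(\sigma(\cdot))$ in $\la$ to get $\ell\dot S=S(\sigma(\cdot))+\la\dot S(\sigma(\cdot))$; the pointwise identity for $W(u,v)^\Delta$ with $u=S$, $v=\dot S$ then collapses to $W(S,\dot S)^\Delta=-\big(S(\sigma(x),\la)\big)^2$. Since \eqref{initial conditions} are $\la$-independent we have $\dot S(a_1,\la)=\dot S^\Delta(a_1,\la)=0$, so $W(S,\dot S)(a_1)=0$; integrating and evaluating at a zero $\la_0$ of $\Theta_0$, where $S(b_{N+M},\la_0)=0$ and $\dot S(b_{N+M},\la_0)=\dot\Theta_0(\la_0)$, the identity becomes $S^\Delta(b_{N+M},\la_0)\,\dot\Theta_0(\la_0)=\int_{a_1}^{b_{N+M}}\big(S(\sigma(x),\la_0)\big)^2\,\Delta x$. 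By the already-established reality $\la_0\in\mathbb{R}$, so the right-hand side is the positive weight number, forcing $\dot\Theta_0(\la_0)\neq0$, and the zero is simple. Replacing $S$ by $C$ and using $C^\Delta(a_1,\la)=0$ treats $\Theta_1$ identically. I expect the only genuine obstacles to be the careful endpoint evaluation of $W$ at $b_{N+M}$ when that point is isolated (handled via \eqref{2}) and the strict positivity of the weight integral in the degenerate regime $N=0$; both are routine once the Green identity is available.
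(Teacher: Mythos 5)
Your proposal is correct and follows essentially the same route as the paper: constancy of the Wronskian $W(C,S)$ for the disjointness of the two spectra (with the isolated-endpoint case handled by rewriting the $\Delta$-derivatives at $b_{N+M-1}$), and the Lagrange/Green identity together with strict positivity of the weight integral $\int|y(\sigma(t))|^2\,\Delta t$ for reality and simplicity. The only differences are cosmetic: the paper integrates up to $t_r=\max T^0$ rather than $b_{N+M}$ (precisely the endpoint issue you flag and resolve via \eqref{2}), and it obtains simplicity by passing to the limit $\la\to\la_0$ in the two-parameter identity \eqref{int} and invoking $W(C,S)=1$, which additionally produces the formula $\int|C(\sigma(t),\la_0)|^2\,\Delta t=-\Theta_1'(\la_0)/\Theta_0(\la_0)$ reused later for the positivity of the weight numbers, whereas your $\la$-differentiation of $S$ yields the equivalent conclusion directly.
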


\begin{proof}
1. It is obvious that $W(C, S)=1$  for all $t \in T^0$ and $\la \in \mathbb C.$ Suppose that $S(b_{N+M}, \la_0)=C(b_{N+M}, \la_0) = 0$ for some $\la_0.$ If $a_{N+M} < b_{N+M},$ then $\{C(t, \la_0),  S(t, \la_0)\},$ $t \in (a_{N+M}, b_{N+M}),$ is a fundamental system of solutions of the $N$-th equation \eqref{4} in $\la=\la_0,$ and we arrive at the contradiction. In the case when $a_{N+M} = b_{N+M},$ from \eqref{jump conditions} follows the linear dependence of the vectors $$(S(b_{N+M-1}, \la_0), C(b_{N+M-1}, \la_0))^T, \ (S^\Delta(b_{N+M-1}, \la_0), C^\Delta(b_{N+M-1}, \la_0))^T,$$ where $T$ is the transposition sign. The latter contradicts to $W(C, S) = 1.$

2. Consider the case $j=1.$ Let $\Theta_1(\la_0) = 0$ and $t \in T^{0^2}.$ From \eqref{1} for $y = C(t, \la)$ and for $y = C(t, \la_0)$ one can obtain
$$-C^{\Delta\Delta}(t, \la){C(\sigma(t), \la_0)} + {C^{\Delta\Delta}(t, \la_0)} C(\sigma(t), \la) = (\la - \la_0) C(\sigma(t), \la) {C(\sigma(t), \la_0)}.$$
The relation $(f(t) g(t))^\Delta = f^\Delta(t) g(\sigma(t)) + f(t) g^\Delta(t)$ yields that 
$$(-C^{\Delta}(t, \la) {C(t, \la_0)} + {C^{\Delta}(t, \la_0)} C(t, \la))^\Delta = (\la - \la_0) C(\sigma(t), \la){C(\sigma(t), \la_0)}.$$
 Denote $t_{r} := \max {T^0}.$ Note that  $t_r = b_{N+M}$ when  $b_{N+M}$ is left-dense and $t_r = b_{N+M-1}$ in the opposite case.
Integrating both sides of the previous relation and using the initial conditions \eqref{initial conditions}, we get
\begin{equation} (\la-\la_0) \int_{a_1}^{t_r} C(\sigma(t), \la) {C(\sigma(t), \la_0)} \, \Delta t = {C^{\Delta}(t_{r}, \la_0)} C(t_{r}, \la)-{C(t_{r}, \la_0)} C^{\Delta}(t_{r}, \la).
\label{int}\end{equation}
Due to real-valuedness of $q,$ since $\la_0$ is an eigenvalue, the number $\overline{\la_0}$ is also an eigenvalue with the eigenfunction $C(t, \overline{\la_0}) = \overline{C(t, \la_0)}.$  Substituting $\la=\overline{\la_0}$ into \eqref{int}, we have
\begin{equation*} -2 \, \mathrm{Im} \, \la_0 \int_{a_1}^{t_r} |C(\sigma(t), \la_0)|^2 \, \Delta t = {C^{\Delta}(t_{r}, \la_0)} \overline{C(t_{r}, \la_0)}-{C(t_{r}, \la_0)} \overline{C^{\Delta}(t_{r}, \la_0)}.
\end{equation*}
Then the following relation is obvious in the case $t_r = b_{N+M}:$
\begin{equation} -2 \, \mathrm{Im} \, \la_0 \int_{a_1}^{t_r} |C(\sigma(t), \la_0)|^2 \, \Delta t = 0. \label{Im}\end{equation}
The formula is also valid when $t_r \ne b_{N+M}$ since  $C^{\Delta}(t_{r}, \la_0) = \frac{C(b_{N+M}, \la_0) - C(t_{r}, \la_0)}{b_{N+M} - t_r}$ in this case.

From \eqref{integral} it follows that
\begin{equation}\int_{a_1}^{t_r} |C(\sigma(t), \la_0)|^2 \, \Delta t = \sum_{k\colon b_k < t_{r}} (a_{k+1} - b_k)|C(a_{k+1}, \la_0)|^2 + \sum_{k=1}^N \int_{a_{l_k}}^{b_{l_k}} |C(t, \la_0)|^2 \, dt > 0. \label{pos} \end{equation} 
Indeed, in the case when $a_1 < b_1$ the function $C(x, \la_0) $ is non-zero one on $[a_1, b_1]$ and $\int_{a_1}^{b_1} |C(t, \la_0)|^2 \, dt > 0;$ when $a_1 = b_1$ the relation $C(a_2, \la_0) = C(a_1, \la_0) + C^\Delta(a_1, \la_0)(a_2-a_1) = 1$ is fullfiled, and since $a_1=b_1 < t_{r},$ we have $$\sum_{k\colon b_k < t_{r}} (a_{k+1} - b_k)|C(a_{k+1}, \la_0)|^2 \ge a_2-a_1 > 0.$$

From \eqref{Im} and \eqref{pos} we conclude that $\mathrm{Im} \, \la_0 = 0.$
Further, from \eqref{int}, real-valuedness of $C(t, \la_0)$ and the equality $\Theta_1(\la_0)=0$  we get in both cases $a_{N+M} < b_{N+M}$ and $a_{N+M}=b_{N+M}$  that
$\int_{a_1}^{t_r} |C(\sigma(t), \la_0)|^2 \, \Delta t = {C^{\Delta}(t_{r}, \la_0)} \Theta_1'(\la_0).$ From the equality $W(C, S)=1$  it follows that 
\begin{equation}\int_{a_1}^{t_r} |C(\sigma(t), \la_0)|^2 \, \Delta t = -\frac{\Theta'_1(\la_0)}{\Theta_0(\la_0)}. \label{positive}
\end{equation} 
 Thus, \eqref{pos} yields the simplicity of $\la_0$ as the zero of $\Theta_1(\la).$ The case $j=0$ can be treated analogously. \end{proof}

 Let $\Phi(x,\lambda),$ $x \in T,$ be a solution of equation (\ref{1}) satisfying the boundary conditions
\begin{equation*}
\Phi^\Delta(a_1,\lambda)=1,\quad \Phi(b_{N+M},\lambda)=0.                               \label{8}
\end{equation*}

We call $M(\lambda):=\Phi(a_1,\lambda)$ the Weyl function, which generalizes the classical Weyl function. It is obvious that
\begin{equation}
\Phi(x,\lambda)=S(x,\lambda)+M(\lambda)C(x,\lambda),                              \label{9}
\end{equation}
\begin{equation}
M(\lambda)=-\frac{\Theta_0(\lambda)}{\Theta_1(\lambda)}.                              \label{10}
\end{equation}
Put
  $$ \alpha_{n} :=  \mathop{Res}_{\la = \la_{n1}} M(\la) = -\frac{\Theta_0(\lambda_{n1})}{\Theta'_1(\lambda_{n1})}, \quad  n \ge 1.$$
We call $\alpha_n$ weight numbers. The numbers $1/\alpha_n$ generalize the classical weight numbers for the Sturm--Liouville operator. From \eqref{pos} and \eqref{positive} for $\la_0 \in \{ \la_{n1}\}_{n \ge 1}$ it follows that $\alpha_n>0$ for all $n.$ 

The Weyl function $M(\lambda),$ the spectra $\{ \lambda_{nj}\}_{n \ge 1}, j=0,1,$ and the weight numbers $\{\alpha_n\}_{n \ge 1}$ are called spectral characteristics.
In the next section we establish their properties including asymptotic formulae.

\begin{center}\bf 3. Properties of the spectral characteristics \end{center}

Let us put $d_k := b_{l_k} - a_{l_k},$ $k=\overline{1,N},$ where $l_k$ are determined in \eqref{time scale}.   Without loss of generality, we assume that $l_{k} < l_{k+1},$ $k=\overline{1, N-1}.$ Denote also  $l_0 := 1,$ $l_{N+1} := N+M,$ $\mu_0 := \delta(a_1, b_1)$  and
 \begin{equation*} \beta^l(\la) =: \left\{\begin{array}{cc}\begin{pmatrix}  \beta^l_{11}(\la) &  \beta^l_{12}(\la) \\
 \beta^l_{21}(\la) &  \beta^l_{22}(\la)\end{pmatrix}, & l=\overline{1, l_N-1}, \\[4mm]
 \big(\beta^{l}_{11}(\la),\beta^{l}_{12}(\la)\big), &  l=\overline{l_N, l_{N+1}-\mu_1},\end{array}\right. \quad
\end{equation*}
where $\beta^l(\la)$ are determined for $k=\overline{1, N+\mu_1}$ and $s=\overline{1, l_k - l_{k-1}}$ as follows: 
 \begin{equation*}
 \beta^{l_k - s}(\la) :=  \alpha^{l_{k}-1}(\la) \ldots \alpha^{l_k-s}(\la); \quad \beta^{l_N}(\la) := (1, 0), \, l_N = N+M.
\end{equation*}
By virtue of \eqref{jump conditions}, we have
$$
\big(y(a_{l_k}), y^\Delta(a_{l_k})\big)^T =  \beta^{l_k - s}(\la) \big(y(b_{l_k-s}), y^\Delta(b_{l_k-s})\big)^T, \quad k=\overline{1, N}, \; s=\overline{1, l_k - l_{k-1}},
$$
$$ y(a_{l_{N+1}}) = \beta^{l_{N+1} - s}(\la) \big(y(b_{l_{N+1}-s}), y^\Delta(b_{l_{N+1}-s})\big)^T, \quad s=\overline{1, l_{N+1} - l_N}.$$
Further we establish asymptotic formulae for the elements of $\beta^l(\la).$

\begin{lemma} \label{l1}
For $k=\overline{1, N+\mu_1},$ $s=\overline{1, l_k - l_{k-1}}$ the following asymptotic formulae are fulfilled:
\begin{equation}\beta_{ij}^{l_k - s}(\la) =  a^{l_k - s}_{ij} (\la^{s-2+i} + b^{l_k - s}_{ij} \la^{s-3+i} + O(\la^{s-4+i})), \quad i = \overline{1, 2-\delta(k, N+1)}, \ j=1,2,\label{beta_asymp}
\end{equation}
where 
\begin{equation}a^{l_k - s}_{ij} = (-1)^{s-2+i}(a_{l_{k}-s+1} - b_{l_{k}-s})^{j-2} (a_{l_{k}} - b_{l_{k}-1})^{i-2} \prod_{l=l_k-s}^{l_{k} - 1} (a_{l+1} - b_l)^2,\label{a}\end{equation}
\begin{equation}b^{l_k - s}_{ij} = -
\left(\sum_{l=l_k-s+2-j}^{l_{k}-3+i} (a_{l+1} - b_l)^{-2} +  \sum_{l=l_k-s+1}^{l_{k}-1} (a_{l+1} - b_l)^{-1}(a_{l} - b_{l-1})^{-1} + \sum_{l=l_k-s}^{l_{k}-3+i} q(b_l)\right).\label{b}\end{equation}
\end{lemma}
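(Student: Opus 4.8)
The plan is to induct on the number $s$ of factors in $\beta^{l_k-s}(\la)=\alpha^{l_k-1}(\la)\cdots\alpha^{l_k-s}(\la)$, holding the left end $l_k-1$ fixed and appending one factor on the right, so that $\beta^{l_k-(s+1)}(\la)=\beta^{l_k-s}(\la)\,\alpha^{l_k-s-1}(\la)$. Before starting I would record the elementary $\la$-structure of a single factor: from the explicit entries, the first row of $\alpha^l(\la)$ is independent of $\la$, whereas the second row is affine of exact degree one, with leading coefficients $-(a_{l+1}-b_l)$ in position $(2,1)$ and $-(a_{l+1}-b_l)^2$ in position $(2,2)$. Expanding the product as a sum over index paths $i\to m_1\to\cdots\to m_{s-1}\to j$, a factor raises the degree by one exactly when its row index equals $2$; the maximal power is thus reached along the path with all intermediate indices equal to $2$, and equals $s$ for $i=2$ and $s-1$ for $i=1$. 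This accounts for the exponent $s-2+i$ in \eqref{beta_asymp} and shows that the leading coefficient is the product of the factor leading coefficients along that all-twos path, which reduces to \eqref{a}.

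For the base case $s=1$ the matrix $\beta^{l_k-1}(\la)$ is just $\alpha^{l_k-1}(\la)$, and \eqref{beta_asymp}--\eqref{b} follow by reading off its four entries; here every sum in \eqref{b} is empty except for the single term $q(b_{l_k-1})$ when $(i,j)=(2,1)$ and the terms $(a_{l_k}-b_{l_k-1})^{-2}$ and $q(b_{l_k-1})$ when $(i,j)=(2,2)$, so that $b^{l_k-1}_{11}=b^{l_k-1}_{12}=0$, $b^{l_k-1}_{21}=-q(b_{l_k-1})$ and $b^{l_k-1}_{22}=-\big((a_{l_k}-b_{l_k-1})^{-2}+q(b_{l_k-1})\big)$. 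In the inductive step I will substitute the assumed expansions of $\beta^{l_k-s}_{im}$ into the scalar identities $\beta^{l_k-(s+1)}_{ij}=\sum_m \beta^{l_k-s}_{im}\,\alpha^{l_k-s-1}_{mj}$ and collect the two top powers of $\la$. The leading terms arise only from the second column of $\beta^{l_k-s}$ times the degree-one part of the new factor, giving immediately $a^{l_k-(s+1)}_{i1}=-(a_{l_k-s}-b_{l_k-s-1})\,a^{l_k-s}_{i2}$ and $a^{l_k-(s+1)}_{i2}=-(a_{l_k-s}-b_{l_k-s-1})^2\,a^{l_k-s}_{i2}$, both of which match \eqref{a} after isolating the $l=l_k-s-1$ factor of the product.

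For the subleading coefficients I will use the ratio $a^{l_k-s}_{i1}/a^{l_k-s}_{i2}=(a_{l_k-s+1}-b_{l_k-s})^{-1}$, read off from \eqref{a}, together with the next power of $\la$ in the same scalar products. Writing $q':=q(b_{l_k-s-1})$, this will yield the recursions
\begin{align*}
b^{l_k-(s+1)}_{i1} &= b^{l_k-s}_{i2} - (a_{l_k-s}-b_{l_k-s-1})^{-1}(a_{l_k-s+1}-b_{l_k-s})^{-1} - q', \\
b^{l_k-(s+1)}_{i2} &= b^{l_k-s}_{i2} - (a_{l_k-s}-b_{l_k-s-1})^{-1}(a_{l_k-s+1}-b_{l_k-s})^{-1} - (a_{l_k-s}-b_{l_k-s-1})^{-2} - q'.
\end{align*}
The crux is then to reconcile these with \eqref{b}: comparing the closed form for $b^{l_k-(s+1)}_{ij}$ with that for $b^{l_k-s}_{i2}$, I must check that lowering the right end from $l_k-s$ to $l_k-s-1$ extends each of the three sums in \eqref{b} by exactly one summand, namely $(a_{l_k-s}-b_{l_k-s-1})^{-2}$ for the first sum (only when $j=2$), $(a_{l_k-s}-b_{l_k-s-1})^{-1}(a_{l_k-s+1}-b_{l_k-s})^{-1}$ for the cross-term sum, and $q(b_{l_k-s-1})$ for the potential sum, matching the recursions term by term.

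I expect the main obstacle to be precisely this index-range bookkeeping in \eqref{b}: keeping the lower and upper limits of the three sums aligned under $s\mapsto s+1$ and across the columns $j=1,2$, so that each extra summand lands in the correct sum with the correct sign. A secondary point is the boundary blocks: for $k=N+1$ the object $\beta$ is a row vector (only $i=1$ occurs), and the degenerate value $\beta^{l_N}=(1,0)$ together with the parameter $\mu_1$ enters the induction only through the base case, the recursion itself being unchanged, so these cases are handled along the same lines.
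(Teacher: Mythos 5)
Your proposal is correct and follows essentially the same route as the paper: induction on $s$ via the recursion $\beta^{l_k-s-1}(\la)=\beta^{l_k-s}(\la)\,\alpha^{l_k-s-1}(\la)$, a directly verified base case $s=1$, and the same coefficient recursions for $a^{l_k-s}_{ij}$ and $b^{l_k-s}_{ij}$ (using $a^{l_k-s}_{i1}/a^{l_k-s}_{i2}=(a_{l_k-s+1}-b_{l_k-s})^{-1}$), which are then matched against the closed forms \eqref{a}--\eqref{b}. Your extra bookkeeping of how each of the three sums in \eqref{b} gains exactly one summand under $s\mapsto s+1$ is exactly the verification the paper leaves implicit, and your recursions agree with the ones displayed in the paper's proof.
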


\begin{proof} Fix any $k\in \overline{1, N+\mu_1}.$ In the case $s=1$ formulae \eqref{beta_asymp}--\eqref{b} are checked directly. Let \eqref{beta_asymp}--\eqref{b} be fulfilled for some $s=\mu \in [1, l_k - l_{k-1}).$ Then 
$$\beta_{ij}^{l_k-\mu-1}(\la) =  \beta_{i2}^{l_k-\mu}(\la) \alpha_{2j}^{l_k-\mu-1}(\la) + \beta_{i1}^{l_k-\mu}(\la) \alpha_{1j}^{l_k-\mu-1}(\la), \quad i=\overline{1,2-\delta(k, N+1)}, \, j=1,2.$$
Using this formulae and the induction assumption, we get \eqref{beta_asymp} for $s=\mu+1$ with the coefficients $a_{ij}^{l_k-\mu-1} = -(a_{l_k-\mu}-b_{l_k-\mu-1})^j a_{i2}^{l_k-\mu}$ and 
$$b_{ij}^{l_k-\mu-1} = -q(b_{l_k-\mu-1}) - \frac{a_{i1}^{l_k-\mu}}{(a_{l_k-\mu}-b_{l_k-\mu-1})a_{i2}^{l_k-\mu}}+ \frac{1-j}{(a_{l_k-\mu}-b_{l_k-\mu-1})^{2}} + b_{i2}^{l_k-\mu}.$$ 
Then representations \eqref{a} and \eqref{b} in $s=\mu$ yield \eqref{a} and \eqref{b} in $s=\mu+1.$ 
Thus, by induction \eqref{beta_asymp}--\eqref{b} is proved for  $s=\overline{1, l_k-l_{k-1}}.$  
\end{proof}

Let us split $T$ into the union of the sets
$$
T_m:=\bigcup_{k=m}^{N+M}\,[a_k, b_k],\ T_{m,0}:=\bigcup_{k=1}^{m-1}\,[a_k, b_k], \quad m=\overline{1,N+M-\mu_1},
$$
and consider the solutions $S_{m}(x, \la), C_{m}(x, \la)$ of Sturm--Liouville equation \eqref{1} on $T_m$ satisfying the initial conditions
$$S_{m}(a_{m}, \la) = S^\Delta_{m}(a_{m}, \la) - 1 = C_{m}(a_{m}, \la) - 1 = C^\Delta_{m}(a_{m}, \la) = 0.$$
If $T_m \bigcap T^{0^2} = \emptyset,$ then the functions $S_{m}$ and $C_m$ are completely determined by these initial conditions.

The functions $S_{l_k}(x+ a_{l_k}, \la), C_{l_k}(x+ a_{l_k}, \la),$ $x \in [0, d_k],$ $k=\overline{1, N},$ 
can be obtained as the solutions of the following integral equations:
\begin{equation}\left.
\begin{array}{c} \displaystyle
S_{l_k}(x + a_{l_k}, \la) = \frac{\sin \rho x}{\rho} + \int_0^x \frac{\sin \rho (x - t)}{\rho} S_{l_k}(t + a_{l_k}, \la) \, q_k(t)\, dt, \\[3mm] \displaystyle
C_{l_k}(x + a_{l_k}, \la) = \cos \rho x + \int_0^x \frac{\sin \rho (x - t)}{\rho} C_{l_k}(t + a_{l_k}, \la) \,q_k(t)\, dt,
\end{array}\right\} \label{Volterra}
\end{equation}
where  $q_k(x) := q(a_{l_k} + x),$ $x \in [0, d_k].$ 
Obviously, $q_k \in W^1_2[0, d_k].$ Substituting the standard asymptotic formulae for $S_{l_k}(x + a_{l_k}, \la)$ and $C_{l_k}(x + a_{l_k}, \la)$
(see \cite[Sect.1.1]{[5]}) into \eqref{Volterra}, for $x=d_k$ we obtain 
\begin{equation} \label{KuMA_S}
S_{l_k}(b_{l_k}, \la) = \frac{\sin \rho d_k}{\rho}\left[1+ \frac{\tilde A_{1k}}{\rho^2}\right] - \frac{\cos \rho d_k}{\rho^2} \omega_k - \frac{1}{4 \rho^3} \int_0^{d_k} q'_k(t) \sin \rho(2t-d_k) \, dt + \frac{O(e^{|\tau |d_k})}{\rho^4},
\end{equation}

\begin{equation} \label{KuMA_S'}
S'_{l_k}(b_{l_k}, \la) = \cos \rho d_k \left[1+ \frac{\tilde A_{2k}}{\rho^2}\right] + \frac{\sin \rho d_k}{\rho}\omega_k +  \frac{1}{4 \rho^2} \int_0^{d_k} q'_k(t) \cos \rho(2t-d_k) \, dt + \frac{O(e^{|\tau |d_k})}{\rho^3},
\end{equation}

\begin{equation} \label{KuMA_C}
C_{l_k}(b_{l_k}, \la) = \cos \rho d_k \left[1+ \frac{\tilde A_{3k}}{\rho^2}\right] + \frac{\sin \rho d_k}{\rho}\omega_k - \frac{1}{4 \rho^2} \int_0^{d_k} q'_k(t) \cos \rho(2t-d_k) \, dt + \frac{O(e^{|\tau |d_k})}{\rho^3},
\end{equation}

\begin{equation}
\label{KuMA_C'}
C'_{l_k}(b_{l_k}, \la)  = -\rho \sin \rho d_k  \left[1+ \frac{\tilde A_{4k}}{\rho^2}\right]  + \cos \rho d_k \, \omega_k  - \frac{1}{4 \rho} \int_0^{d_k} q'_k(t) \sin \rho(2t-d_k) \, dt + \frac{O(e^{|\tau |d_k})}{\rho^2}.
\end{equation}
Here 
$$\tau := \mathrm{Im}\,\rho, \quad \omega_k := \frac12 \int_0^{d_k} q_k(t) \, dt, \quad \tilde A_{ik} := \frac{(-1)^{[(i-1)/2]}q_k(0)}{4}+ \frac{(-1)^{i-1}q_k(d_k)}{4} - \frac{\omega_k^2}{2},$$  $i=\overline{1,4},$ $k=\overline{1,N},$ and $[x]$ denotes the integer part of $x.$

Denote ${\cal D}_0^m(\la) := S_m(b_{N+M}, \la),$  ${\cal D}_1^m(\la) := C_m(b_{N+M}, \la),$ $m=\overline{1,N+M-\mu_1}.$ In particular, $\Theta_j(\la) = {\cal D}_j^{1}(\la),$ $j=0,1.$ 
 We also introduce the functions $\Phi_m(x,\lambda),$ $x \in T_m,$ which are solutions of equation (\ref{1}), $m=\overline{1, N+M-\mu_1},$ satisfying the boundary conditions
$\Phi_m^\Delta(a_m,\lambda)=1,\quad \Phi_m(b_{N+M},\lambda)=0. $
One can obtain the following formulae, which are analogues of \eqref{9}, \eqref{10}: 
\begin{equation}
\Phi_m(x,\lambda)=S_m(x,\lambda)+M_m(\lambda)C_m(x,\lambda),                              \label{m9}
\end{equation}
where
\begin{equation}
M_m(\lambda)=-\frac{{\cal D}_0^{m}(\la)}{{\cal D}_1^{m}(\la)}.                              \label{m10}
\end{equation}

\begin{lemma} \label{l2} For $j=0,1$ the following representations hold:
\begin{equation} \label{Dkjcase0}
{\cal D}_j^{l}(\la) = \beta_{1,2-j}^l(\la), \quad l=\overline{l_N+1, N+M-1}, \end{equation}

\begin{multline} \label{Dkjcase1}
{\cal D}_j^{l_k-s}(\la) = \rho^{\mu_1-1}\beta^{l_k-s}_{2,2-j}(\la) \prod_{i=k}^{N-1} \beta_{22}^{l_i}(\la) \beta^{l_N}_{1,1+\mu_1}(\la) \left(\prod_{l=k}^N g_l(\rho) + \frac{O(e^{|\tau|\gamma_k})}{\rho^3}\right), \\ s=\overline{1,l_k-l_{k-1}-1+\delta(k,1)\mu_0},\end{multline}

\begin{equation} \label{Dkjcase2}
{\cal D}_j^{l_k}(\la) = (-1)^{j(1-\delta_k)}\rho^{\mu_1+j-1}\prod_{i=k}^{N-1} \beta_{22}^{l_i}(\la) \beta^{l_N}_{1,1+\mu_1}(\la)  \left(\prod_{l=k+1}^N g_l(\rho) v_{kj}(\rho) + \frac{O(e^{|\tau|\gamma_k})}{\rho^3}\right),\end{equation}
where $k=\overline{1,N}.$ For these $k$ and $j=0,1$ we denoted $\gamma_k:=\sum_{l=k}^N d_l,$ $\delta_k := \delta(l_k, N+M),$
$$g_k(\rho)  := v_{k0}(\rho) + (-1)^{\delta_k}\frac{v_{k1}(\rho)}{\rho(a_{l_k} - b_{l_k - 1})}, \quad l_k > 1,$$

\begin{multline}v_{kj}(\rho) :=f_{kj}(\rho)\left(1 + \frac{A_{kj}}{\rho^2}\right) +  f_{k,1-j}(\rho) \frac{c_k (-1)^{j+\delta_k}}{\rho}+\frac{(-1)^{\delta_k}}{4\rho^2} \int_0^{d_k} f_{kj}((2t/d_k - 1)\rho) q'_k(t) \, dt,
\label{v_kj}\end{multline}
where
$$
 f_{k0}(x) := \left\{ \begin{array}{cc}
\sin d_kx, & \delta_k = 1, \\
\cos d_kx, & \delta_k = 0,
\end{array} \right. \quad
 f_{k1}(x) := \left\{ \begin{array}{cc}
\cos d_kx, &\delta_k = 1, \\
\sin d_kx, & \delta_k = 0,
\end{array} \right. 
$$
$c_k$ and $A_{kj}$ are some constants, which can be expressed from $q_k:$
$$c_k := \left\{ \begin{array}{cc}
\omega_k, & \delta_k = 1, \\
\omega_k + \frac{1}{a_{l_k+1} - b_{l_k}}, & \delta_k=0,
\end{array} \right. \quad
A_{kj} := \left\{\begin{array}{cc}
\tilde A_{k,2j+1}, &  \delta_k = 1, \\
\tilde A_{k,2j+2} - \frac{\omega_k}{a_{l_k+1}-b_{l_k}}, & \delta_k = 0.
\end{array} \right.$$ \end{lemma}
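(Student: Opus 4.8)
The plan is to establish the three representations together by a downward induction on the segment number $k$, organised around a transfer-matrix description of $S_m$ and $C_m$ along $T_m$. The guiding remark is that, since $y\in C^2(T)$, its $\Delta$-derivative is continuous on $T^{0^2}$; as $b_{l_k}$ (for $k<N$) and $a_{l_k}$ are left-dense and right-dense respectively, this forces $y^\Delta(b_{l_k})=y'(b_{l_k})$ and $y^\Delta(a_{l_k})=y'(a_{l_k})$. Hence the propagation of the state $(y,y^\Delta)^T$ across $T_m$ splits into two kinds of steps: crossing a segment $[a_{l_k},b_{l_k}]$ multiplies the state by the classical fundamental matrix with columns $(C_{l_k}(b_{l_k},\la),C'_{l_k}(b_{l_k},\la))$ and $(S_{l_k}(b_{l_k},\la),S'_{l_k}(b_{l_k},\la))$, whose entries are given by \eqref{KuMA_S}--\eqref{KuMA_C'}; crossing the intervening isolated points multiplies it by the matrices $\beta^l(\la)$ of Lemma~\ref{l1}.

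First I would settle the base of the induction. For $l>l_N$ the set $T_l$ carries no segment, so $S_l,C_l$ are moved only by the jump conditions \eqref{jump conditions}, and reading the value at $b_{N+M}=a_{N+M}$ gives at once ${\cal D}_0^l=\beta_{12}^l$ and ${\cal D}_1^l=\beta_{11}^l$, which is \eqref{Dkjcase0}. For $k=N$ the identity \eqref{Dkjcase2} follows by composing the single crossing of $[a_{l_N},b_{l_N}]$ with the terminal $\beta^{l_N}$-transfer (or, when $\mu_1=0$, by reading the segment solution directly at $b_{N+M}=b_{l_N}$); substituting \eqref{KuMA_S}--\eqref{KuMA_C'} and retaining terms down to order $\rho^{-2}$ produces $v_{Nj}$, the pieces proportional to $1/(a_{l_N+1}-b_{l_N})$ inside $c_N$ and $A_{Nj}$ being exactly the contribution of the exit jump out of the last segment.

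The inductive step rests on two linearity relations. For an isolated starting point $m=l_k-s$, decomposing the solution on $T_{l_k}$ in the basis $C_{l_k},S_{l_k}$ gives
\begin{equation*}
{\cal D}_j^{l_k-s}(\la)=\beta_{1,2-j}^{l_k-s}(\la)\,{\cal D}_1^{l_k}(\la)+\beta_{2,2-j}^{l_k-s}(\la)\,{\cal D}_0^{l_k}(\la),
\end{equation*}
since the data of $S_m$, resp. $C_m$, select the $(2-j)$-th column of $\beta^{l_k-s}$. Inserting the representation \eqref{Dkjcase2} at the same $k$ and the ratio $\beta_{1,2-j}^{l_k-s}/\beta_{2,2-j}^{l_k-s}=-\rho^{-2}(a_{l_k}-b_{l_k-1})^{-1}(1+O(\rho^{-2}))$ read off from \eqref{a}--\eqref{b}, the $v_{k0}$- and $v_{k1}$-channels combine precisely into $g_k=v_{k0}+(-1)^{\delta_k}v_{k1}/(\rho(a_{l_k}-b_{l_k-1}))$, which yields \eqref{Dkjcase1}; this is the step in which the entry gap $a_{l_k}-b_{l_k-1}$ enters $g_k$. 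Dually, crossing $[a_{l_k},b_{l_k}]$ and then applying the jump $\alpha^{l_k}(\la)$ expresses ${\cal D}_j^{l_k}$ through ${\cal D}_0^{l_k+1}$ and ${\cal D}_1^{l_k+1}$, i.e.\ through the already-known level-$(k+1)$ data; substituting \eqref{KuMA_S}--\eqref{KuMA_C'}, isolating the dominant derivative channel, and collecting orders gives \eqref{Dkjcase2} at level $k$ with the additional factor $g_{k+1}$ supplied by the downstream representation, which closes the induction.

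The hard part is the order-bookkeeping rather than any single idea. At each step one must check that the dominant term assembles into the product $\prod_{l} g_l$ with the stated prefactor $(-1)^{j(1-\delta_k)}\rho^{\mu_1+j-1}\prod_{i=k}^{N-1}\beta_{22}^{l_i}\beta_{1,1+\mu_1}^{l_N}$, while every subdominant contribution --- from the non-leading entries of $\beta^l$ in Lemma~\ref{l1}, from the $O(e^{|\tau|d_k}/\rho^{3})$-type remainders in \eqref{KuMA_S}--\eqref{KuMA_C'}, and from products of such remainders --- is absorbed into the single error $O(e^{|\tau|\gamma_k}/\rho^3)$, the factor $e^{|\tau|\gamma_k}$ being the cumulative growth of the fundamental solutions across the segments $l_k,\dots,l_N$. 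Alongside this one must track the degenerate configurations consistently: the value of $\mu_1$ (last point a segment or isolated), of $\mu_0$ (whether $a_1=b_1$), of $\delta_k$ (whether $l_k=N+M$, which fixes the boundary datum read at $b_{N+M}$ and hence the choice of $f_{kj}$), and the adjacency of consecutive segments --- in which case \eqref{Dkjcase1} at the next level is vacuous and the single gap $a_{l_{k+1}}-b_{l_k}$ must serve simultaneously as the exit jump feeding $v_k$ and the entry jump feeding $g_{k+1}$.
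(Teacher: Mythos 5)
Your proposal is correct and follows essentially the same route as the paper: a downward induction anchored at the right end of $T$, with segment crossings handled via the asymptotics \eqref{KuMA_S}--\eqref{KuMA_C'}, isolated points handled via the jump/transfer matrices of Lemma~\ref{l1}, and the expansion ${\cal D}_j^{m}=(\text{coef})\,{\cal D}_1^{m+1}+(\text{coef})\,{\cal D}_0^{m+1}$ driving the step (your composite $\beta^{l_k-s}$ relation is just the paper's one-point recursion \eqref{system} iterated). The ratio computation producing $g_k$ from the $v_{k0}$- and $v_{k1}$-channels matches the paper's \eqref{v_{N-j}1}, so no gap.
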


{\it Proof.} We will prove these formulae by induction. For $k=N+M-\mu_1$ the formulae \eqref{Dkjcase0} or \eqref{Dkjcase2} is fulfilled: \eqref{Dkjcase0} follows from the jump conditions \eqref{jump conditions} while \eqref{Dkjcase2} follows from \eqref{KuMA_S}, \eqref{KuMA_C}.
Let ${\cal D}_j^{m+1}(\la)$ be given by formulae \eqref{Dkjcase2} for some $l_k = m+1>0.$
We consider two possible cases. First, let $l_{i} = l_k - 1,$ $i=k-1 > 0.$
Using \eqref{jump conditions} we expand $Y_0 := S_{m}$ and $Y_1:=C_{m}$ with respect to the system $\{C_{m+1}, S_{m+1} \}$ on $T_{m+1}:$
\begin{multline} \label{basic Dkj}
{\cal D}_j^{m}(\la) =(\alpha^{m}_{11}(\la) Y_j(b_{m}, \la) +\alpha^{m}_{12}(\la)  Y'_j(b_{m}, \la)){\cal D}_1^{m+1}(\la) + \\[3mm] (\alpha^{m}_{21}(\la)  Y_j(b_{m}, \la) + \alpha^{m}_{22}(\la) Y'_j(b_{m}, \la)){\cal D}_0^{m+1}(\la), \quad j=0,1.
\end{multline}
From \eqref{KuMA_S}--\eqref{KuMA_C'} and the definition of $\beta^l(\la)$ it follows that
\begin{equation*} \label{v_{N-j}2}
\alpha^{m}_{21}(\la)  Y_j(b_{m}, \la) + \alpha^{m}_{22}(\la) Y'_j(b_{m}, \la) = (-1)^j \rho^j \beta^{m}_{22}(\la) \left(v_{k-1,j}(\rho) + \frac{O(e^{|\tau|d_{k-1}})}{\rho^3}\right),\end{equation*}
\begin{multline}  \label{v_{N-j}1}
\alpha^{m}_{11}(\la) Y_j(b_{m}, \la) +\alpha^{m}_{12}(\la)  Y'_j(b_{m}, \la) = 
(-1)^{j} \rho^j \beta^{m}_{12}(\la) \left(v_{k-1,j}(\rho) + \frac{O(e^{|\tau|d_{k-1}})}{\rho^2}\right)
\\ =(-1)^{j+1} \rho^j \beta^{m}_{22}(\la) \left(\frac{\la^{-1} v_{k-1,j}(\rho)}{a_{l_{k}}-b_{l_{k}-1}} + \frac{O(e^{|\tau|d_{k-1}})}{\rho^4}\right).\end{multline}
These relations with \eqref{basic Dkj} and the induction assumption \eqref{Dkjcase2} give formulae \eqref{Dkjcase2} for $k=i.$ 

Second, let $a_{m} = b_{m}.$ Expanding $S_{m}, C_{m}$ with respect to the system $\{ S_{m+1}, C_{m+1}\}$  on $T_{m+1},$ we get
\begin{equation} \label{system}
{\cal D}_j^{m}(\la) =\alpha^{m}_{1,2-j}(\la) {\cal D}_1^{m+1}(\la)+ \alpha^{m}_{2,2-j}(\la) {\cal D}_0^{m+1}(\la), \quad j=0,1.
\end{equation}
Bracing $\alpha^{m}_{2,2-j}(\la)$ with \eqref{beta_asymp} and applying the induction assumption, we prove \eqref{Dkjcase1} in $s=1.$ 

The other cases are operated with the same technique. $\hfill\Box$

\medskip
The previous lemma yields that
\begin{equation}\Theta_j(\la) = \left\{ \begin{array}{cc}
F_j(\la) + O\left(\exp(\gamma_1|\tau|) \la^{N+M-2 +j(1-\mu_0)/2-\mu_1/2} \right), & N> 0,\\[3mm] 
\beta_{1,2-j}^1(\la), & N=0,
\end{array}\quad j=0,1,\right. \label{char}\end{equation}
where
$$ F_j(\la) = (-1)^{j(1-\delta_1)(1-\mu_0)} \rho^{\mu_1+j(1-\mu_0)-1}\prod_{k=1-\mu_0}^{N-1} \beta^{l_k}_{2,2-j\delta(0,k)}(\la) \beta^{l_N}_{1, 1+\mu_1}(\la)
 \prod_{k=2}^{N} f_{k0}(\rho) f_{1,(1-\mu_0)j}(\rho).$$

By the standard method involving Rouche's theorem \cite{[5]}, from \eqref{char} the following structure of the spectra can be established.

\begin{theorem} \label{spectrum parts}
Each spectrum consists of $N+1$ parts:
$$\{ \la_{nj} \}_{n \ge 1} = \Lambda_j \bigcup \bigg( \bigcup_{k=1}^N \big\{ (\rho^{(k)}_{nj})^2 \big\}_{n \ge 1} \bigg), \quad j=0,1,
$$
where $\Lambda_j$ contains $N+M+j(1-\mu_0) \mathrm{sign}(N-1+\mu_1)-\mu_1-1$ elements and for the subsequences $\big\{ (\rho^{(k)}_{nj})^2 \big\}_{n \ge 1}$ the following asymptotic formulae are fulfilled: 
\begin{equation}\rho^{(k)}_{nj} = \pi \frac{n - \delta^{j \delta(1,k)(1-\mu_0)}_k}{d_k} + o(1), \; \; \delta_k^0 := \frac12 \delta(\delta_k, 0), \; \; \delta^1_k := \frac12 - \delta_k^0, \quad 1 \le k \le N.
\label{asymp_eig}
\end{equation}
\end{theorem}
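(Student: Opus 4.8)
The plan is to apply Rouch\'e's theorem to the representation \eqref{char}, working in the variable $\rho$ (recall $\la = \rho^2$) rather than in $\la,$ since the functions $\Theta_j$ are entire of exponential type in $\rho.$ When $N=0$ the claim is immediate: by \eqref{char} and Lemma~\ref{l1} the function $\Theta_j(\la)=\beta^1_{1,2-j}(\la)$ is a polynomial in $\la,$ so its zero set is finite and constitutes $\Lambda_j$ with the advertised cardinality, all $N$ subsequences being absent. So assume $N>0$ and analyze the main term $F_j(\la).$ It factors into a \emph{polynomial part} --- the power $\rho^{\mu_1+j(1-\mu_0)-1}$ together with the product of the entries $\beta^{l_k}_{2,\cdot}$ and $\beta^{l_N}_{1,1+\mu_1},$ each a polynomial in $\la$ by Lemma~\ref{l1} --- and a \emph{trigonometric part} $\prod_{k=2}^N f_{k0}(\rho)\cdot f_{1,(1-\mu_0)j}(\rho),$ a product of exactly $N$ factors, each of which is $\sin d_k\rho$ or $\cos d_k\rho$ according to the value of $\delta_k.$ The zeros of the polynomial part will produce the finite set $\Lambda_j,$ while the $k$-th trigonometric factor will produce the subsequence $\{(\rho^{(k)}_{nj})^2\}_{n\ge1}.$

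First I would establish the asymptotics \eqref{asymp_eig}. The zeros of $f_{k0}(\rho)$ (for $2\le k\le N$) and of $f_{1,(1-\mu_0)j}(\rho)$ lie at $\rho=\pi(n-\kappa)/d_k,$ with $\kappa=0$ for the sine and $\kappa=1/2$ for the cosine; comparing $\kappa$ with the rule $f_{k0}(x)=\sin d_kx$ iff $\delta_k=1$ (and the analogous rule for $f_{1,\cdot}$) shows these abscissae are exactly $\pi(n-\delta_k^{j\delta(1,k)(1-\mu_0)})/d_k,$ matching the leading term of \eqref{asymp_eig}. Around each such point one draws a circle of fixed small radius; outside the union $G_\delta$ of these circles one has the standard lower bound $|f_{k0}(\rho)|\ge C_\delta\exp(d_k|\tau|),$ whence the trigonometric part of $|F_j|$ is bounded below by $C_\delta^{\,N}\exp(\gamma_1|\tau|).$ Multiplying by the polynomial part and comparing with the remainder $O(\exp(\gamma_1|\tau|)\la^{N+M-2+\cdots})$ in \eqref{char}, whose $\la$-growth is strictly dominated by that of the polynomial part of $F_j,$ shows that on each small circle $\Theta_j$ and $F_j$ have equally many zeros. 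Since the radius may be taken arbitrarily small once $|\rho|$ is large, the displacement of each zero is $o(1),$ which is \eqref{asymp_eig}.

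To count $\Lambda_j$ I would apply Rouch\'e on a family of expanding circles $|\rho|=R_n$ chosen to pass through the gaps between consecutive zeros of the trigonometric factors, so that a bound $|F_j|\ge C\exp(\gamma_1|\tau|)|\la|^{P}$ (with $P$ the $\la$-degree of the polynomial part, read off from \eqref{beta_asymp}) holds on the whole contour and dominates the remainder. By the argument principle the number of zeros of $\Theta_j,$ hence of $F_j,$ inside $|\rho|<R_n$ equals $2P$ from the polynomial part, the contribution of $\rho^{\mu_1+j(1-\mu_0)-1}$ at the origin, and approximately $2R_n\gamma_1/\pi$ from the trigonometric factors. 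Passing to $\la=\rho^2$ (so that $\pm\rho$ collapse to one eigenvalue) and subtracting the subsequence members $(\rho^{(k)}_{nj})^2$ with $|\rho^{(k)}_{nj}|<R_n,$ the growing contributions cancel and the remaining fixed integer is precisely $|\Lambda_j|;$ it consists of the polynomial zeros, the zero at the origin, and those low-index trigonometric abscissae that fall outside the $n\ge1$ enumeration of the subsequences. Carrying out the bookkeeping of the half-integer $\rho$-powers and of the parities of the trigonometric factors yields the stated value $N+M+j(1-\mu_0)\,\mathrm{sign}(N-1+\mu_1)-\mu_1-1.$

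The main obstacle is this last count. One must track several competing effects at once: the order $\mu_1+j(1-\mu_0)-1$ of the zero of $F_j$ at $\rho=0,$ the exact degrees of $\beta^{l_k}_{2,\cdot}$ and $\beta^{l_N}_{1,1+\mu_1}$ coming from \eqref{beta_asymp}, and the possibility that a trigonometric zero coincides with $\rho=0$ or with a zero of the polynomial part when the lengths $d_k$ are commensurable. Such coincidences leave the total count unchanged but must be absorbed into $\Lambda_j$ or into the relevant subsequence with the correct multiplicity, and it is exactly here that the dependence of the cardinality on $\mu_0,$ $\mu_1,$ $\delta_k$ and on $\mathrm{sign}(N-1+\mu_1)$ enters. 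Getting every boundary configuration right --- $N=1,$ $\mu_0$ or $\mu_1$ equal to $1,$ and $\delta_k=0$ versus $\delta_k=1$ --- is the delicate part of the argument, whereas the Rouch\'e estimates themselves are routine once \eqref{char} is in hand.
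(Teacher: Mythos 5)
Your proposal is correct and follows exactly the route the paper itself indicates: the paper proves this theorem only by the single remark that it follows ``by the standard method involving Rouch\'e's theorem'' from the representation \eqref{char}, and your decomposition of $F_j$ into a polynomial part (yielding the finite set $\Lambda_j$) and the trigonometric product $\prod_{k=2}^{N} f_{k0}(\rho)\, f_{1,(1-\mu_0)j}(\rho)$ (yielding the $N$ subsequences with the stated leading terms), together with the contour-counting of zeros, is precisely that standard argument. The bookkeeping you flag as delicate is indeed the only nontrivial content, and the paper leaves it to the reader as well.
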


The main parts of eigenvalues' roots in \eqref{asymp_eig} from different subsequences can occur arbitrarily close to each other, which causes the difficulty in the further refinement of these asymptotic formulae.
To overcome it, we make the following additional assumption: 
\begin{equation} \label{commensurability}
d_k = r x_k, \; x_k \in \mathbb{Q}, \quad k = \overline{1, N}, \text{ for some } r>0,\end{equation} which 
means commensurability of the segments. Analogous commensurability assumptions appear also in other situations, e.g. for studying  spectral properties of differential operators on geometrical graphs (see, e.g., \cite{bond}).
Assumption \eqref{commensurability} is needed for Theorems \ref{t1}--\ref{t2} and is not used anywhere else.
This assumption yields that for any fixed  $s,k\in \overline{1,N}$ and $j, \nu \in \{0, 1\}$ for all $l,n \in \mathbb{N}$
 we have  the following alternatives:  $$\frac{d_k}{d_s} = \frac{l - \delta^{j}_k}{n - \delta^{\nu}_s}\text{  or  }\left|\frac{d_k}{d_s} - \frac{l - \delta^{j}_k}{n - \delta^{\nu}_s}\right| \ge (Cn)^{-1},$$ where and in the sequel $C$ denotes different sufficiently large constants. Then we have
\begin{equation}f_{kj}\left(\pi \frac{n - \delta^{\nu}_s}{d_s}\right) = 0 \text{ or } C >\left|f_{kj}\left(\pi \frac{n - \delta^{\nu}_s}{d_s}\right)\right|> C^{-1}.
\label{approx}\end{equation}

 Denote by $\eta_{kj}(\rho)$ the multiplicity of $\rho$  as a zero of the function 
 $$\prod_{l=k+1}^N f_{l0}(\rho) f_{kj}(\rho), \quad k=\overline{1, N}.$$ 
For briefness denote different sequences from $l^2$ by one and the same symbol $\{ \kappa_n\}_{n \ge 1}.$
We also use $\{\kappa_n(z)\}_{n \ge 1}$ to designate different sequences of functions which are continuous in some circle $|z|\le R$ with  $$\Big\{\max_{|z| \le R}|\kappa_n(z)|\Big\}_{n \ge 1}\in l^2.$$
 
The following theorem refines formulae \eqref{asymp_eig}  under the additional condition \eqref{commensurability}.
\begin{theorem} \label{t1}
If \eqref{commensurability} is fulfilled, for the subsequences $\big\{ (\rho^{(k)}_{nj})^2 \big\}_{n \ge 1}$ we have
\begin{equation} \rho_{nj}^{(k)} = \frac{\pi( n-\delta^{j \delta(1, l_k)}_k)}{d_k}  + O\left(\frac{1}{n}\right), \quad k = \overline{1, N}, \ n \in \mathbb{N}.
\label{eig O(1/n)} \end{equation}
\end{theorem}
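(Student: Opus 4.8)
The plan is to pin down the zeros of the characteristic function $\Theta_j$ near the candidate points $\rho_n^0 := \pi(n-\delta^{j\delta(1,l_k)}_k)/d_k$ (for fixed $k,j$) by Rouch\'e's theorem on circles of radius of order $1/n$, using the product representation from Lemma~\ref{l2}. Taking for definiteness $\mu_0=0$, formula \eqref{Dkjcase2} with $k=1$ (equivalently \eqref{char}) expresses $\Theta_j(\rho^2)$ as a polynomial factor --- which is bounded away from zero for large $|\rho|$ and accounts only for the finite set $\Lambda_j$ --- times $\Omega_j(\rho) + O(e^{\gamma_1|\tau|}\rho^{-3})$, where $\Omega_j(\rho) := \prod_{l=2}^N g_l(\rho)\,v_{1j}(\rho)$ is a product of the per-segment factors $g_l$ and $v_{1j}$ of Lemma~\ref{l2}. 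Dividing out the polynomial factor reduces the characteristic equation to $\Omega_j(\rho) + O(\rho^{-3}) = 0$ on every disc $|\rho-\rho_n^0|\le C/n$; there $|\tau|=O(1/n)$, so $e^{\gamma_1|\tau|}=1+O(1/n)$ and the exponential is harmless.

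First I would observe that the factor attached to the $k$-th segment (namely $g_k$ for $k\ge 2$, or $v_{1j}$ for $k=1$) has, near its main zero $\rho_n^0$, the form $f'_{k0}(\rho_n^0)(\rho-\rho_n^0)+O(1/\rho)$ with $|f'_{k0}(\rho_n^0)|=d_k$, by \eqref{v_kj}. Hence this factor carries a genuine zero $\zeta_k=\rho_n^0+O(1/n)$, which already displays the $O(1/n)$ correction claimed in \eqref{eig O(1/n)}. It then remains to transfer this localization from $\Omega_j$ to $\Theta_j$ through a minimum-modulus estimate on the boundary circle $|\rho-\rho_n^0|=R/n$.

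This is exactly where the commensurability hypothesis \eqref{commensurability} enters, via the dichotomy \eqref{approx}. The per-segment factors whose main zeros do not fall on $\rho_n^0$ take, at $\rho_n^0$, values bounded below by a fixed constant $C^{-1}$ and vary by only $O(1/n)$ across the circle, so they stay $\asymp 1$ there and may be absorbed into a non-vanishing part $\Omega_j^{\mathrm{nonvan}}$ with $|\Omega_j^{\mathrm{nonvan}}|\asymp 1$. The remaining vanishing part is $\Omega_j^{\mathrm{van}}(\rho)\asymp\prod(\rho-\zeta_l)$, the product being over those factors whose main zeros equal $\rho_n^0$; since each such $\zeta_l=\rho_n^0+O(1/n)$, choosing $R$ large forces $|\rho-\zeta_l|\ge (R-C)/n$ on the circle, whence $|\Omega_j|\ge c\,((R-C)/n)^{\eta_{1j}(\rho_n^0)}$. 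Comparing this with the remainder $O(\rho^{-3})$ and invoking Rouch\'e's theorem, I would conclude that $\Theta_j$ has precisely $\eta_{1j}(\rho_n^0)$ zeros inside the circle, each within $O(1/n)$ of $\rho_n^0$; matching these against the subsequences furnished by Theorem~\ref{spectrum parts}, and recalling that in a coincidence all the relevant main terms equal $\rho_n^0$, yields \eqref{eig O(1/n)}.

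I expect the genuine obstacle to be the coincidence case $\eta_{1j}(\rho_n^0)>1$, where several subsequences deposit clustered eigenvalues near a single $\rho_n^0$ and $\Omega_j$ degenerates like $n^{-\eta_{1j}(\rho_n^0)}$ on the test circle. For the Rouch\'e comparison to survive, the remainder must be of strictly smaller order than $\Omega_j^{\mathrm{van}}$ there; the $O(\rho^{-3})$ remainder of Lemma~\ref{l2} handles coincidences of order at most three, and higher-order coincidences would force one to carry the expansion of $\Omega_j$ further. The separation estimate \eqref{approx} is equally essential, since it is what keeps $\Omega_j^{\mathrm{nonvan}}$ bounded away from zero on the circle. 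By contrast, the non-coincidence case $\eta_{1j}(\rho_n^0)=1$ is routine: $\Omega_j$ then has a simple zero with derivative of size $\asymp 1$, and $\Theta_j(\rho^2)=\Omega_j(\rho)+O(1/n)$ together with the known localization $\rho_{nj}^{(k)}=\rho_n^0+o(1)$ from Theorem~\ref{spectrum parts} gives the $O(1/n)$ bound at once.
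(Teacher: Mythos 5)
Your overall strategy (localize the zeros of $\Theta_j$ by Rouch\'e's theorem on circles of radius $R/n$ about $K_{nj}$) is viable in principle --- it is essentially the engine behind Theorem~\ref{spectrum parts} and Theorem~\ref{precise eigenvalues} --- but the step you yourself flag is a genuine gap, not a loose end. The multiplicity $\eta_{1j}(K_{nj})$ can be as large as $N$ (take $N$ commensurable segments of equal length: all the factors $f_{l0}$ vanish simultaneously at infinitely many of the points $K_{nj}$), so for $N\ge 4$ your lower bound $c\,((R-C)/n)^{\eta_{1j}(K_{nj})}$ for the vanishing part is swamped by the $O(\rho^{-3})$ remainder of Lemma~\ref{l2} no matter how large $R$ is chosen, and the Rouch\'e comparison collapses. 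Your proposed remedy, ``carry the expansion of $\Omega_j$ further,'' does not work as stated: the uniform $O(\rho^{-3})$ accuracy is intrinsic to $q\in W^1_2$ and cannot be improved by adding explicit terms to the main part.

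What actually closes the gap --- and what the paper's proof of Theorem~\ref{t1} consists of --- is the observation that near a coincidence point the remainder is not merely $O(n^{-3})$ but inherits the smallness of the vanishing factors: redoing the induction of Lemma~\ref{l2} at $\rho=K_{nj}+z$, every cross-term in the product contains all but a few of the factors $v_{l0}=(\pm d_l+o(1))z+O(1/n)$, which yields the refined expansions \eqref{pol1}--\eqref{pol2} with remainder $\sum_{l=0}^{\eta-1}z^{l}O(n^{l-\eta})$ in place of a monolithic $O(n^{-3})$. With this in hand the paper does not even need Rouch\'e for Theorem~\ref{t1}: substituting the eigenvalue $\rho^{(N)}_{nj}=K_{nj}+z_{nj}$ (with $z_{nj}=o(1)$ already known from Theorem~\ref{spectrum parts}) into $\Theta_j=0$ gives $|z_{nj}|^{\eta}\le C\sum_{l<\eta}|z_{nj}|^{l}O(n^{l-\eta})$, and the substitution $y_{nj}=nz_{nj}$ turns this into $|y_{nj}|^{\eta}\le C\sum_{l<\eta}|y_{nj}|^{l}$, which forces $y_{nj}=O(1)$ by the bootstrap you would expect. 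Your argument becomes correct once the crude remainder is replaced by this refined one; as written it covers only coincidences of order at most three. A smaller, patchable point: Rouch\'e on radius $R/n$ counts the zeros inside that circle, while Theorem~\ref{spectrum parts} only places the relevant subsequence elements within $o(1)$ of $K_{nj}$, so you also need a count on a fixed small radius to rule out zeros lingering in the annulus between $R/n$ and $o(1)$.
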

\begin{proof}
We plan to use the formulae of Lemma~\ref{l2}. In the case $N=1$ the computations are analogous to the classical case of the Sturm--Liouville equation on interval since the problem of close eigenvalues' roots does not arise. Therefore, we consider only the case $N>1.$

For definiteness we consider $$\rho^{(N)}_{nj} =  K_{nj} + z_{nj} := \pi \frac{n-\delta^0_N}{d_N} + z_{nj}, \; z_{nj} = o(1), \; n \to \infty. $$
We substitute $\rho = \rho^{(N)}_{nj}$ into ${\cal D}^l_{\nu}(\rho^2),$ $\nu=0, 1,$ $l=\overline{1, N+M-\mu_1}.$ The following formulae can be proved by induction:
\begin{multline} \label{pol1}
{\cal D}^{l_k}_{\nu}((\rho^{(N)}_{nj})^2) = (\rho^{(N)}_{nj})^{\mu_1+\nu-1}\beta^{l_N}_{1,1+\mu_1}((\rho^{(N)}_{nj})^2)\prod_{i=k}^{N-1} \beta_{22}^{l_i} \big((\rho^{(N)}_{nj})^2\big) \\
\times \Bigg(c^{k \nu}_{nj} z_{nj}^{\eta_{k\nu}(K_{nj})} + \sum_{l=0}^{\eta_{k\nu}(K_{nj})-1} z_{nj}^l O(n^{l-\eta_{k\nu}(K_{nj})})\Bigg),\end{multline}
\begin{multline} \label{pol2}
{\cal D}^{l_k-s}_{\nu}((\rho^{(N)}_{nj})^2) = (\rho^{(N)}_{nj})^{\mu_1-1}\beta^{l_N}_{1,1+\mu_1}((\rho^{(N)}_{nj})^2)\prod_{i=k}^{N-1} \beta_{22}^{l_i} \big((\rho^{(N)}_{nj})^2\big) \beta_{2, 2-j}^{l_k-s} \big((\rho^{(N)}_{nj})^2\big) \\
\times  \Bigg(c^{k0}_{nj} z_{nj}^{\eta_{k0}(K_{nj})} + \sum_{l=0}^{\eta_{k0}(K_{nj})-1} z_{nj}^l O(n^{l-\eta_{k0}(K_{nj})})\Bigg),\end{multline}
where  $|c^{k\nu}_{nj}| \ge C^{-\eta_{k\nu}(K_{nj})}$ for sufficiently large $n,$ $s=\overline{1,l_k-l_{k-1}-1+\delta(k,1)\mu_0}.$ Their proof is conducted according the scheme of the one of Lemma~\ref{l2}; for formula \eqref{pol1} one should consider two cases $f_{k\nu}(K_{nj}) = 0$ and $C>|f_{k\nu}(K_{nj})|>C^{-1}$ due to \eqref{approx}. In the first case $v_{k\nu}(\rho^{(N)}_{nj}) = (\pm d_{k} + o(1)) z_{nj} + O(n^{-1})$ and the degree $\eta_{k\nu}(K_{nj})=\eta_{k+1,0}(K_{nj}) +1,$ in the second case $C>|v_{k\nu}(\rho^{(N)}_{nj})|>C^{-1}$ and $\eta_{k\nu}(K_{nj}) = \eta_{k+1,0}(K_{nj}).$

We note that ${\cal D}_j^1((\rho_{nj}^{(N)})^2) = 0.$ Thus, from \eqref{pol1} for $\mu_0 = 0$ or from \eqref{pol2} for $\mu_0 = 1$ we have
 $\left|z_{nj}\right|^N \le C^N \sum_{l=0}^{N} |z_{nj}|^l O(n^{l-N}),$
 which yields
 $$|y_{nj}|^N \le C^{N+1} \sum_{l=0}^{N-1} |y_{nj}|^l$$
 for $y_{nj}=  n z_{nj}$ and sufficiently large $C.$ From the last inequality it follows that $y_{nj} = O(1).$ Indeed, if  $|y_{nj}| > 2,$ we can estimate
 $$|y_{nj}| \le C^{N+1} \sum_{l=0}^{N-1} |y_{nj}|^{l+1-N} \le 2C^{N+1},$$
 and, hence, $\{ y_{nj}\}_{n=1}^\infty$ is bounded.

 Thus, we proved \eqref{eig O(1/n)} for $k=N.$ The other formulae can be proved analogously.
\end{proof}
Asymptotic formulae \eqref{eig O(1/n)} can be refined as well. Namely, the following theorem holds.
\begin{theorem}\label{precise eigenvalues} 
Denote $$z_k :=\frac{1}{\pi} \Big(c_k + \sum_{l=max(1, l_k-1)}^{l_k-1} (a_{l+1} - b_{l})^{-1}\Big), \quad k=\overline{1, N},$$
where $c_k$ are defined in Lemma~\ref{l2}.
If \eqref{commensurability} holds, then 
\begin{equation}\rho_{nj}^{(k)} = \frac{\pi( n-\delta^{j \delta(1, l_k)}_k)}{d_k} + \frac{z_k }{n-\delta^{j \delta(1, l_k)}_k} + \frac{r^{(k)}_{nj}}{n}, \quad r^{(k)}_{nj}=o(1), \quad k = \overline{1, N}, \ n \in \mathbb{N}.
 \label{kappa/n^2}\end{equation}
Provided all ${z_k}/{d_k},$ $k=\overline{1, N},$ are distinct, we have $r^{(k)}_{nj} = \kappa_n/n.$
 \end{theorem}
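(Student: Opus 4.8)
The plan is to refine the asymptotics $\rho_{nj}^{(k)} = K_{nj}^{(k)} + z_{nj}^{(k)}$ from Theorem~\ref{t1}, where $K_{nj}^{(k)} := \pi(n - \delta_k^{j\delta(1,l_k)})/d_k$ and we already know $z_{nj}^{(k)} = O(1/n)$. For definiteness I would again treat the representative case $k=N$, substituting $\rho = \rho_{nj}^{(N)}$ into the characteristic equation ${\cal D}_j^1((\rho_{nj}^{(N)})^2)=0$ via the polynomial expansions \eqref{pol1}--\eqref{pol2}. The key is to carry the expansion of $v_{k\nu}(\rho)$ from \eqref{v_kj} one order further than in the proof of Theorem~\ref{t1}. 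Writing $\rho = K_{nj} + z_{nj}$ with $K_{nj}d_N = \pi(n-\delta_N^0)$, I would Taylor-expand the trigonometric factors $f_{k\nu}$ about $K_{nj}$: the leading term now picks up not only the linear-in-$z_{nj}$ contribution $(\pm d_N + o(1))z_{nj}$ but also the $O(1/\rho)$ correction coming from the $c_k f_{k,1-j}/\rho$ and $A_{kj}/\rho^2$ terms in \eqref{v_kj}, together with the contributions of the boundary-jump factors $(a_{l+1}-b_l)^{-1}$ that enter through $g_k$ and the matrix products $\beta^{l_i}_{22}$.

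The main computation is to isolate the coefficient of $z_{nj}$ and the constant $O(1/n)$ term in the vanishing factor and solve for $z_{nj}$ to one more order. After dividing \eqref{pol1}/\eqref{pol2} by the nonvanishing prefactor $(\rho_{nj}^{(N)})^{\mu_1+\nu-1}\beta^{l_N}_{1,1+\mu_1}\prod \beta^{l_i}_{22}$ (which is bounded away from zero by the estimate $|c_{nj}^{k\nu}|\ge C^{-\eta_{k\nu}}$), the characteristic equation reduces to a relation of the form $d_N z_{nj} + w_N/K_{nj} + o(1/n) = 0$, where $w_N$ collects exactly the constant $c_N$ together with the jump contributions $\sum_{l=\max(1,l_N-1)}^{l_N-1}(a_{l+1}-b_l)^{-1}$. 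This identifies $z_{nj} = -(w_N/d_N)/K_{nj} + o(1/n)$; recalling $K_{nj} = \pi(n-\delta_N^0)/d_N$ and comparing with the definition of $z_N$ in the statement, one reads off $z_{nj}^{(N)} = z_N/(n-\delta_N^0) + r_{nj}^{(N)}/n$ with $r_{nj}^{(N)} = o(1)$, which is \eqref{kappa/n^2} for $k=N$. The general $k$ follows by the same induction scheme used in Lemma~\ref{l2} and Theorem~\ref{t1}.

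For the final refinement, assume all $z_k/d_k$ are distinct. Then the main terms $\pi(n-\delta_k^{\cdots})/d_k + z_k/(n-\delta_k^{\cdots})$ of different subsequences separate from one another: the leading frequencies $\pi/d_k$ already differ, and where two leading terms happen to coincide (as the commensurability alternative \eqref{approx} allows), the distinctness of the next-order coefficients $z_k/d_k$ guarantees the roots stay apart by at least $\sim C^{-1}/n$. This separation lets one apply Rouch\'e's theorem with a contour of radius $\sim 1/n$ around each refined center and conclude that the remainder $r_{nj}^{(k)}$ is not merely $o(1)$ but is square-summable after scaling, i.e. $r_{nj}^{(k)} = \kappa_n/n$ with $\{\kappa_n\}\in l^2$, using that the $o(1)$ remainders in \eqref{v_kj} come from the $W_2^1$-integral terms $\int_0^{d_k} f_{kj}((2t/d_k-1)\rho)q_k'(t)\,dt$, whose Fourier-type coefficients lie in $l^2$ by Bessel's inequality.

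\emph{The hardest step} is controlling the cross-interaction between subsequences when the leading asymptotic terms of different $k$ collide. In the generic case the distinctness of $z_k/d_k$ resolves this, but verifying that the colliding factors contribute only an $l^2$ perturbation — rather than corrupting the $z_k/(n-\delta_k)$ main term — requires careful bookkeeping of which $f_{l0}(K_{nj})$ vanish (governed by $\eta_{k\nu}$) and tracking the $O(n^{l-\eta})$ error hierarchy in \eqref{pol1}--\eqref{pol2} to one order beyond what Theorem~\ref{t1} needed.
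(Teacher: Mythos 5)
Your overall strategy (push the expansion of $v_{k\nu}$ in \eqref{v_kj} one order further, substitute $\rho=K_{nj}+z$ into the characteristic function, and extract the next-order coefficient, which collects $c_N$ and the jump contributions $(a_{l+1}-b_l)^{-1}$) is the same as the paper's, and your identification of where the $l^2$ remainders come from (the $\int_0^{d_k} f(\cdot)q_k'(t)\,dt$ terms via Bessel's inequality) matches the paper's $\kappa_n(z)$ bookkeeping. But there is a genuine gap at the central step. You claim the characteristic equation ``reduces to a relation of the form $d_N z_{nj} + w_N/K_{nj} + o(1/n)=0$'' and then solve this \emph{linear} equation for $z_{nj}$. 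That reduction is only valid when $N$ is the \emph{only} subsequence whose leading term sits at $K_{nj}$. Under commensurability \eqref{commensurability} the alternative \eqref{approx} allows several factors $f_{m0}(K_{nj})$ to vanish simultaneously; the set $I$ of such indices can have $|I|>1$, and then the vanishing part of $\Theta_j((K_{nj}+z/(n-\delta_N^0))^2)$ is a \emph{product} of $|I|$ near-zero linear factors, i.e.\ a polynomial of degree $|I|$ in $z$, not a single linear relation. One cannot ``isolate the coefficient of $z_{nj}$ and the constant term'' in that situation. The paper resolves this by deriving the explicit product representation \eqref{Theta_eig}, applying Rouch\'e's theorem to locate all $\eta_{1,j(1-\mu_0)}(K_{nj})$ roots at $z=d_Nz_m/d_m+o(1)$, $m\in I$, and then matching each root to its subsequence by rewriting it in the form $\pi(l-\delta_m^{\cdots})/d_m+(z_m+o(1))/(l-\delta_m^{\cdots})$; the root with $m=N$ gives \eqref{kappa/n^2}. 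Note this multi-root issue arises already for the first assertion ($r^{(k)}_{nj}=o(1)$), \emph{before} the distinctness hypothesis on the $z_k/d_k$ is available, so it cannot be deferred to the ``generic case'' as your last paragraph suggests. You correctly flag the collision of subsequences as the hardest step, but flagging it is not the same as closing it: as written, the reduction to a single linear equation is where the argument breaks.

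Your treatment of the final refinement ($r^{(k)}_{nj}=\kappa_n/n$ when the $z_k/d_k$ are distinct) is essentially the paper's: once the roots of the product in \eqref{Theta_eig} are separated by a fixed amount in the $z$-variable, a Rouch\'e contour $|z-z_N|<\delta$ isolates a single root and the $\kappa_n(z)$ remainder yields the $l^2$ rate. That part would be fine once the product representation is in place.
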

 \begin{proof}
By the same reason as for the proof of Theorem~\ref{t1}, we consider only the case $N > 1.$
Any point in the vicinity of $\rho^{(N)}_{nj}$ can be represented in the form $$\rho^{(N)}_{nj}(z) = \pi \frac{n - \delta_N^0}{d_N} + \frac{z}{n- \delta_N^0} =: K_{nj} + \frac{z}{n- \delta_N^0}, \quad |z| \le C.$$
Substituting $\rho=\rho^{(N)}_{nj}(z)$ into the ${\cal D}^l_\nu(\rho^2),$ $\nu=0,1,$ $l=\overline{1, N+M-\mu_1},$ analogously to \eqref{pol1} and \eqref{pol2} one can prove that
\begin{multline}\label{basic for wn}
{\cal D}_\nu^{l_k}((\rho^{(N)}_{nj}(z))^2) = (-1)^{\nu(1-\delta_k)}\rho^{\mu_1+\nu-1}\prod_{i=k}^{N-1} \beta_{22}^{l_i}((\rho^{(N)}_{nj}(z))^2) \beta^{l_N}_{1,1+\mu_1}((\rho^{(N)}_{nj}(z))^2) \\ \times\left(\prod_{l=k+1}^N g_l(\rho^{(N)}_{nj}(z)) v_{k\nu}(\rho^{(N)}_{nj}(z)) + \frac{\kappa_n(z)}{K_{nj}^{\eta_{k\nu}(K_{nj}) + 1}}\right), \; k=\overline{1,N},\end{multline}
\begin{multline}\label{basic for wn0}
{\cal D}_\nu^{l_k-s}((\rho^{(N)}_{nj}(z))^2) = \rho^{\mu_1-1}\beta^{l_k-s}_{2,2-j}((\rho^{(N)}_{nj}(z))^2) \prod_{i=k}^{N-1} \beta_{22}^{l_i}((\rho^{(N)}_{nj}(z))^2) \beta^{l_N}_{1,1+\mu_1}((\rho^{(N)}_{nj}(z))^2) \\ \times \left(\prod_{l=k}^N g_l(\rho^{(N)}_{nj}(z)) + \frac{\kappa_n(z)}{K_{nj}^{\eta_{k0}(K_{nj}) + 1}}\right), \; k=\overline{1,N},\, s=\overline{1,l_k-l_{k-1}-1+\delta(k,1)\mu_0}.\end{multline}

For the proof it is sufficient to obtain the analogue of \eqref{v_{N-j}1} with $\kappa_n(z)$ instead of $O(e^{|\tau|d_{k}})$ in the case when $f_{k\nu}(K_{nj}) = 0$ using Lemma~\ref{l1}; the other computations are similar to the proof of Theorem~\ref{t1}. 

Denote 
$$I:=\left\{ \begin{array}{cc}\{ m \colon f_{m0}(K_{nj}) = 0, m > 1\} \bigcup \{1\}, &  f_{1,j(1-\mu_0)}(K_{nj}) = 0, \\
\{ m \colon f_{m0}(K_{nj}) = 0, \,m > 1\}, & f_{1,j(1-\mu_0)}(K_{nj}) \ne 0. \end{array}\right.$$
Consider $\mu_0=0$  and \eqref{basic for wn} for $k = 1$ (for $\mu_0=1$ one uses  \eqref{basic for wn0}).
Using the condition \eqref{approx} we obtain
 \begin{multline}\Theta_j((\rho^{(N)}_{nj}(z))^2) = {\cal D}^{1}_{j}((\rho^{(N)}_{nj}(z))^2) = (\rho^{(N)}_{nj}(z))^{\mu_1+j-1}\beta^N_{1,1+\mu_1}((\rho^{(N)}_{nj}(z))^2)\prod_{i=1}^{N-1} \beta_{22}^{l_i}\big((\rho^{(N)}_{nj}(z))^2\big) 
 \\ \times\left(C_{nj} \prod_{m \in I}  \left(\frac{d_m z}{n- \delta_N^0} - \frac{d_N z_m}{n- \delta_N^0}\right) + \frac{\kappa_n(z)}{(n- \delta_N^0)^{\eta_{1j}(K_{nj})+1}}\right),\label{Theta_eig} \end{multline}
 where $|C_{nj}| \ge C^{\eta_{1j}(K_{nj})-N}.$
 With Rouche's theorem we obtain that $\Theta_j((\rho^{(N)}_{nj}(z))^2)$ has $\eta_{1,j(1-\mu_0)}(K_{nj})$ zeros $\rho^{(N)}_{nj}(z) = \pi \frac{n - \delta_N^0}{d_N} + \frac{z}{n- \delta_N^0}$ with $z = d_N z_m/d_m + o(1),$ $m \in I.$ This  means that $\Theta_j(\rho^2)$ has the following $\eta_{1,j(1-\mu_0)}(K_{nj})$ zeros which are close to $K_{nj}:$ 
 $$\pi \frac{n- \delta_N^0}{d_N} + \frac{d_N z_m + o(1)}{d_m (n- \delta_N^0)} = \pi \frac{l - \delta_m^{j\delta(1, l_m)}}{d_m} + \frac{z_m + o(1)}{l - \delta_m^{j \delta(1, l_m)}}, \quad m \in I,$$ such that $d_N/d_m = (n- \delta_N^0)/(l - \delta_m^{j \delta(1, l_m)})$ for some $l \in \mathbb{N}.$ In particular, we have \eqref{kappa/n^2} when $m=N \in I.$

Using \eqref{Theta_eig} in the vicinity $|z - z_N| < \delta$ for a sufficiently small $\delta$ it is easy to prove \eqref{kappa/n^2}  for $k=N$ with $r^{(k)}_{nj} = \kappa_n/n.$

The other formulae can be proved analogously.\end{proof}

Let us obtain asymptotic formulae for the weight numbers. 
For them one can prove the analogues of Theorems~\ref{spectrum parts}--\ref{precise eigenvalues}. However, for briefness we provide only  formulae under the conditions of Theorem~\ref{precise eigenvalues}.

\begin{theorem} \label{t2}
The sequence $\{ \alpha_n \}_{n \ge 1}$ consists of $N+1$ parts:
\begin{equation}\label{wn parts}\{ \alpha_n \}_{n \ge 1} = A 
\bigcup \bigg(\bigcup_{k=1}^N \big\{ \alpha^k_n \big\}_{n \ge 1}\bigg), \quad
 \alpha^k_n := \mathop{Res}_{\la = (\rho^{(k)}_{n1})^2} M(\la), \ A := \{\mathop{Res}_{\la = z} M(\la) \colon z \in \Lambda_1\}.\end{equation}
If \eqref{commensurability} is fulfilled and all $z_k/d_k,$ $k=\overline{1, N},$ are distinct, the following asymptotic formulae hold:
  \begin{equation} \label{wn precise}
  \alpha^k_n =\left\{ \begin{array}{cc} \displaystyle\frac{2}{d_1} \Big(1  + \frac{\kappa_n}{n}\Big), & k=1, \, \mu_{0} = 0, \\[4mm]
 \displaystyle  \frac{\kappa_n}{n}, & \text{ all the other cases}.\end{array} \right.
  \end{equation} \end{theorem}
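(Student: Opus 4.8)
\emph{Proof proposal.} The decomposition \eqref{wn parts} costs nothing beyond Theorem~\ref{spectrum parts}: $\alpha_n$ is by definition the residue of $M$ at $\la_{n1}$, so the splitting of $\{\la_{n1}\}_{n\ge1}$ into the finite set $\Lambda_1$ and the $N$ subsequences $\{(\rho^{(k)}_{n1})^2\}_{n\ge1}$ transfers verbatim to $\{\alpha_n\}_{n\ge1}$. Thus all the work lies in the asymptotics \eqref{wn precise}, and, exactly as in Theorems~\ref{t1} and \ref{precise eigenvalues}, I would settle $N=1$ by the classical argument and treat $N>1$ in detail.

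The computational core is the identity $\alpha^k_n=-\Theta_0(\la)/\Theta_1'(\la)$ at $\la=(\rho^{(k)}_{n1})^2$. By \eqref{positive} this equals $\big(\int_{a_1}^{t_r}|C(\sigma(t),\la)|^2\,\Delta t\big)^{-1}$, which re-proves positivity and shows that $\alpha^k_n$ is inverse to the $L^2$-mass of the eigenfunction; I keep this reformulation as a sanity check but carry out the estimate through $\Theta_0$ and $\Theta_1'$, since the relevant expansions are already available. For a fixed subsequence (say $k=N$) I pass to the local variable $z$ via $\rho=\rho^{(N)}_{n1}(z)=K_{n1}+z/(n-\delta^0_N)$ and read off $\Theta_0={\cal D}^1_0$ and $\Theta_1={\cal D}^1_1$ from \eqref{basic for wn}, \eqref{basic for wn0}: each is a common $\beta$-prefactor times a local bracket built from $\prod_l g_l$ and a resonant factor $v_{k\nu}$ as in \eqref{v_kj}, plus an $l^2$-controlled remainder $\kappa_n(z)$.

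I then assemble the ratio. Since $\Theta_1$ vanishes at the eigenvalue $z=z^{(N)}_n$, with $z^{(N)}_n=z_N+o(1)$ by Theorem~\ref{precise eigenvalues}, the product rule kills the term in which the prefactor is differentiated, so $\Theta_1'(\la)$ equals the prefactor times $\frac{d}{dz}(\text{bracket})$ times the Jacobian $\frac{dz}{d\la}=\frac{n-\delta^0_N}{2\rho}$. Here the distinctness of the $z_k/d_k$ is decisive: through \eqref{approx} it forces the multiplicity $\eta$ in \eqref{Theta_eig} down to $1$, so exactly one factor of the bracket has a simple zero, its $z$-derivative stays bounded away from $0$, and the leftover remainder enters the final answer only as a $\kappa_n/n$ term.

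What distinguishes the two regimes is which factor of the bracket vanishes. Writing $\Theta_j=\text{pref}_j\cdot\text{bracket}_j$, the two brackets share the product $\prod g_l$ over the non-resonant segments and differ only in the factor attached to the resonant one. When $k=1$, $\mu_0=0$ this distinguishing factor is $v_{1j}\sim f_{1,j}$, and since $f_{1,0},f_{1,1}$ are complementary, exactly the one entering $\Theta_1$ vanishes while the one entering $\Theta_0$ stays of order $1$; thus $\Theta_0\approx\text{pref}_0\prod_{l\ge2}g_l\,v_{10}$ survives, $\Theta_1'\approx\text{pref}_1\prod_{l\ge2}g_l\,\frac{d}{dz}v_{11}\,\frac{dz}{d\la}$, the $g_l$ cancel, and because $\text{pref}_0/\text{pref}_1=\rho^{-(1-\mu_0)}=\rho^{-1}$ compensates the $\rho$ inside the Jacobian, the ratio settles to the constant $2/d_1$---the value confirmed by the classical check, where $\Theta_0\sim\rho^{-1}\sin\rho d_1$ and $\Theta_1'\sim\rho^{-1}\sin\rho d_1$ give $\alpha_n\to 2/d_1$. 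In every remaining case ($k>1$, or $k=1$ with $\mu_0=1$) the resonant factor is a $g_l$, which is $j$-independent and hence common to both brackets; so $\text{bracket}_0$ vanishes together with $\text{bracket}_1$, $\Theta_0$ at the eigenvalue collapses to its $l^2$-remainder, and the ratio picks up an extra power of $n^{-1}$, giving $\alpha^k_n=\kappa_n/n$. The main obstacle is exactly the disciplined propagation of these $l^2$ remainders: I must verify by Cauchy's estimate on $|z|\le C$ that every error generated along the induction is a genuine $\{\kappa_n(z)\}$, and that dividing by the bracket-derivative---bounded below precisely because the $z_k/d_k$ are distinct---keeps the final correction in $l^2$.
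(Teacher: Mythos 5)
Your proposal is correct in substance and reaches the right constants, but it extracts the residue differently from the paper. You compute $\alpha_n^k=-\Theta_0(\la)/\Theta_1'(\la)$ at the eigenvalue, differentiate the asymptotic bracket in the local variable $z$, and use that the bracket vanishes at the eigenvalue to kill the prefactor term; this requires you to justify term-by-term differentiation of the expansions (you correctly note Cauchy's estimate does this, since $\kappa_n(z)$ is analytic and sup-norm controlled on $|z|\le R$) and to know the location of the zero from Theorem~\ref{precise eigenvalues}. The paper instead writes $\alpha_n^1=\frac{1}{2\pi i}\int_{|z|=\delta}\frac{2\rho_n(z)}{n}M(\rho_n^2(z))\,dz$ and evaluates the contour integral using only sup-norm bounds of the expansion \eqref{basic for wn} on the circle $|z|=\delta$, after cancelling the common product $\prod_{l=2}^N g_l$ between numerator and denominator of $M$ via the lower bound $|\prod_{l=2}^N g_l(\rho_n(z))|\ge C^{-1}n^{-\eta_{11}(K_{n1})+1}$ and $|\eta_{10}-\eta_{11}|\le1$. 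The contour route is slightly cleaner: no differentiation of asymptotics, no need to locate the zero beyond knowing it lies inside $|z|=\delta$. Both give the same leading term $2/d_1$ (your classical cross-check $\int_0^{d_1}\cos^2\rho x\,dx=d_1/2$ is the right one) and the same $\kappa_n/n$ corrections.

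One statement in your argument is inaccurate and should be repaired in a careful write-up: distinctness of the $z_k/d_k$ does \emph{not} force the multiplicity $\eta$ in \eqref{Theta_eig} down to $1$. If, say, $d_1=d_2$, then $K_{nj}$ is a common zero of $f_{10}$ and $f_{20}$ and $\eta\ge2$; distinctness of $z_1/d_1$ and $z_2/d_2$ only separates the \emph{perturbed} eigenvalues at the scale $z/n$, so near the eigenvalue under study the other resonant factors $g_l$ are of size $\asymp n^{-1}$ rather than bounded below by a constant, and consequently $\frac{d}{dz}(\text{bracket}_1)\asymp n^{-\eta}$ rather than $\asymp1$. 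Your conclusion survives because these small common factors appear in both $\text{bracket}_0$ and the surviving term of $\frac{d}{dz}(\text{bracket}_1)$ and cancel in the ratio --- this is exactly the cancellation the paper performs explicitly before taking the residue --- but as written the claim ``exactly one factor of the bracket has a simple zero, its $z$-derivative stays bounded away from $0$'' is false when $\eta>1$, and the bookkeeping of the powers of $n$ in the remainders $\kappa_n(z)/n^{\eta+1}$ must be carried through with the correct $\eta$.
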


\begin{proof} First of all, we note that \eqref{wn parts} follows from Theorem~\ref{spectrum parts}.
 
 Let us prove \eqref{wn precise}.
Consider the case $k=1,$ $\mu_0 = 0,$ $N > 1.$ Then $\delta_1^1 = 0,$ $\delta_1 = 0,$ $z_1=c_1,$ $f_{11}(x) = \sin d_1x,$ $f_{10}(x) = \cos d_1x.$ Let $\delta > 0$ be  a sufficiently small number such that $2\delta < |z_l/d_l - z_1/d_1|,$ $l=\overline{2,N}.$  Denote $$\rho_n(z) := \pi \frac{n}{d_1} + \frac{z_1 + z}{n} =: K_{n1} + \frac{z_1 + z}{n}, \quad |z| \le \delta.$$ By Cauchy's residue theorem we obtain
\begin{equation}\alpha^1_n= \frac{1}{2 \pi i} \int_{|z| = \delta} \frac{2 \rho_n(z)}{n}M(\rho^2_n(z)) \, dz
\label{Cauchy residue}
\end{equation}
for sufficiently large $n.$
Further, for $|z| \le \delta$ by \eqref{basic for wn} we get
  $$M(\rho^2_n(z)) = 
\frac{\displaystyle \prod_{l=2}^N g_l(\rho_n(z)) v_{10}(\rho_n(z)) + \frac{\kappa_n(z)}{n^{\eta_{10}(K_{n1}) + 1}}}{\displaystyle \rho_n(z) \Bigg(\prod_{l=2}^N g_l(\rho_n(z)) v_{11}(\rho_n(z)) + \frac{\kappa_n(z)}{n^{\eta_{11}(K_{n1}) + 1}}\Bigg)}.$$

 Since $\left|\prod_{l=2}^N g_l(\rho_n(z)) \right| \ge C^{-1} n^{-\eta_{11}(K_{n1})+1}$ on $|z| = \delta$ and $|\eta_{10}(K_{n1}) - \eta_{11}(K_{n1})| \le 1$, we have
 $$M(\rho^2_n(z)) =   \frac{ v_{10}(\rho_n(z)) + \frac{\kappa_n(z)}{n}}{\rho_n(z) \left(v_{11}(\rho_n(z)) + \frac{\kappa_n(z)}{n^2}\right)}.$$
 Substituting Taylor's formulae of $\sin$ and $\cos$  into \eqref{v_kj}, we write
$$v_{11}(\rho_n(z)) = \frac{(-1)^n d_1}{n} \left(z+ \frac{\kappa_n(z)}{n}\right), \quad  v_{10}(\rho_n(z)) = (-1)^n \left(1+ \frac{\kappa_n(z)}{n}\right).$$
Using \eqref{Cauchy residue} and the subsequent formulae, we get
$$
\alpha^1_{n} = \frac{2}{d_1 2 \pi i}\int_{|z| = \delta} \frac{1+ \frac{\kappa_n(z)}{n}}{z + \frac{\kappa_n(z)}{n}} dz  \\
=\frac{2}{d_1 2 \pi i}\int_{|z| = \delta} \Big(\frac{1}{z} + \frac{\kappa_n(z)}{n}\Big) \, dz.
$$
From this equation we obtain \eqref{wn precise} for $k=1, \mu_0=0.$

  The other cases can be operated analogously.\end{proof}



Now let us study the asymptotical behavior of the functions $C_{l_k}(x,\lambda)$ and  $\Phi_{l_k}(x,\lambda)$ in the case $N>0,$ $k=\overline{1, N}.$ For our purposes it is sufficient to
consider $\rho\in\Omega_{\delta}:=\{z:\arg z \in[\delta,\pi-\delta]\}$ and $x\in(a_{l_k},b_{l_k}).$ From \eqref{KuMA_C}--\eqref{KuMA_C'} it follows that
\begin{equation}  \label{16S}
C_{l_k}^{(\nu)}(x+a_{l_k}, \lambda) = \frac{(-i\rho)^\nu}2\exp(-i \rho x)[1], \quad x\in(0,d_k], \quad \rho\in\Omega_\delta, \quad \nu=0,1.
\end{equation}

Using the standard approach (see, for example, \cite{yurko2019}) one can prove the following formulae:
\begin{equation}\label{Phi}
\Phi_{l_k}^{(\nu)}(x+a_{l_k},\lambda) =(i\rho)^{\nu-1}\exp(i\rho x)[1], \quad x\in[0,d_k).
\end{equation}

\begin{center}{\bf 4. Inverse problems}\end{center}

Consider the following three inverse problems. 

 \begin{ip} Given $M(\la)$, find $q(x)$ on $T^{0^2}.$ \end{ip}
 
\begin{ip} Given $\{\lambda_{nj}\}_{n\ge 1},\, j=0,1,$ find $q(x)$ on $T^{0^2}.$\end{ip}

\begin{ip} Given $\{\lambda_{n1}\}_{n\ge 1}, \{\alpha_{n}\}_{n\ge 1},$ find $q(x)$ on $T^{0^2}.$\end{ip}

First, we show that these inverse problems are equalent, i.e. their input data uniquely determine each other. Since $\Theta_0(\la)$ and $\Theta_1(\la)$ have no common zeros, $\{\lambda_{n0}\}_{n\ge 1}$ and $\{\lambda_{n1}\}_{n\ge 1}$ are determined as zeros and poles of the Weyl function. Conversely, 
Hadamard's factorization theorem gives
$$
\Theta_j(\lambda) = C_j p_j(\lambda), \quad p_j(\lambda) = \lambda^{s_j}\prod_{\lambda_{nj}\ne0} \Big(1 -
\frac{\lambda}{\lambda_{nj}}\Big), \quad j=0,1,
$$
where $C_j$ is a non-zero complex constants, while $s_j$ is the multiplicity of the zero eigenvalue in the spectrum $\{\lambda_{nj}\}_{n\ge
1}.$ 

By virtue of \eqref{char}, the following limits exist:
$$
\lim_{\lambda\to i\infty}\frac{\Theta_j(\lambda)}{F_j(\lambda)}=1, \quad j=0,1,
$$
and, hence,
$$
C_j = \lim_{\lambda\to i\infty}\frac{F_j(\lambda)}{p_j(\lambda)}.
$$
Thus, the characteristic functions $\Theta_j(\lambda)$ are uniquely determined by their zeros $\{\lambda_{nj}\}_{n\ge 1}.$ Taking into account formula
(\ref{10}) we conclude that two spectra uniquely determine the Weyl function as well.

Using Lemmas~\ref{l1} and~\ref{l2}, one can prove by technique analogous to \cite{[5]} that the weight numbers and the poles uniquely determine the Weyl function by the formula
$$M(\la) = \left\{\begin{array}{cc} \displaystyle -\mu_0 (a_{2-\delta(N+M,1)}-a_1) + \sum_{n=1}^\infty \frac{\alpha_n}{\la - \la_{n1}}, & \quad N > 0, \\[3mm]
\displaystyle -(a_2 - a_1) + \sum_{n=1}^{N-2} \frac{\alpha_n}{\la - \la_{n1}}, & \quad N=0.\end{array}\right.$$
Thus, given the input data of one inverse problem, we can recover them of any other one. Moreover, both characteristic functions are determined by specifying the input data of any Inverse problem 1--3.

Further, using the ideas of the method of spectral mappings \cite{[5]} we prove the uniqueness theorem for the solutions of the inverse problems. For this purpose together with the boundary value problem $L_0$ we consider a problem $\tilde L_0$ of the same form but with
another potential $\tilde q.$ In this section we agree that if a certain symbol $\gamma$ denotes an object related to $L_0,$ then this symbol with tilde
$\tilde\gamma$ will denote the analogous object related to $\tilde L_0.$

\begin{theorem} \label{t3} If  one of the following conditions is fulfilled, then $q=\tilde q$ on $T^{0^2}:$
\begin{enumerate}
\item $M(\lambda)=\tilde M(\lambda);$
\item $\{\lambda_{nj}\}_{n\ge 1}=\{\tilde\lambda_{nj}\}_{n\ge 1},\, j=0,1;$
\item $\{\lambda_{n1}\}_{n\ge 1}=\{\tilde\lambda_{n1}\}_{n\ge 1}$ and $\{\alpha_n \}_{n \ge 1} = \{\tilde \alpha_n \}_{n \ge 1}.$
\end{enumerate}
Thus, specification of the spectral data of any type
uniquely determines the potential $q.$ \end{theorem}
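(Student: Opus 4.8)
The plan is to apply the method of spectral mappings \cite{[5]}. Since the discussion preceding the theorem shows that the three kinds of input data determine one another, it suffices to derive $q = \tilde q$ from the single hypothesis $M(\la) = \tilde M(\la)$; recall also that $L_0$ and $\tilde L_0$ share the same time scale $T,$ the same gaps $a_{l+1} - b_l,$ and the same boundary conditions, so only the potential differs. First I would introduce, for $x \in T,$ the transition functions $P_{11}(x,\la), P_{12}(x,\la)$ defined by the two relations $C = P_{11}\tilde C + P_{12}\tilde C^\Delta$ and $\Phi = P_{11}\tilde\Phi + P_{12}\tilde\Phi^\Delta.$ Solving this $2\times 2$ system and using that the Wronskian-type determinant $W(\tilde C, \tilde\Phi) \equiv 1$ on $T^0,$ I obtain the explicit expressions $P_{11} = C\tilde\Phi^\Delta - \Phi\tilde C^\Delta$ and $P_{12} = \Phi\tilde C - C\tilde\Phi.$

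Next I would substitute $\Phi = S + M C$ and $\tilde\Phi = \tilde S + \tilde M\tilde C$ from \eqref{9} and use $M = \tilde M.$ The terms carrying the factor $M$ cancel, leaving $P_{11} = C\tilde S^\Delta - S\tilde C^\Delta$ and $P_{12} = S\tilde C - C\tilde S.$ Since $S, C, \tilde S, \tilde C$ and their $\Delta$-derivatives are entire in $\la$ for each fixed $x,$ this shows that $P_{11}(x,\cdot)$ and $P_{12}(x,\cdot)$ are entire; in particular the poles of the Weyl function at $\{\la_{n1}\}$ have been removed by the cancellation.

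The analytic core is then to bound $P_{11}, P_{12}$ in $\la.$ I would fix $x$ in the interior of a segment $(a_{l_k}, b_{l_k})$ (possible since $N > 0$) and use the asymptotics \eqref{16S}, \eqref{Phi} for $C_{l_k}^{(\nu)}, \Phi_{l_k}^{(\nu)}$ in the sector $\Omega_\delta,$ transported to the global solutions $C, S, \Phi$ by propagating through the jump matrices $\alpha^l(\la)$ with the aid of Lemmas \ref{l1} and \ref{l2}. Working with the form $P_{11} = C\tilde\Phi^\Delta - \Phi\tilde C^\Delta$ and $P_{12} = \Phi\tilde C - C\tilde\Phi,$ the exponential factors $e^{\pm i\rho x}$ and the $q$-independent growing prefactors cancel in these determinant-type combinations, so that $P_{11}(x,\la) = 1 + o(1)$ and $P_{12}(x,\la) = o(1)$ as $|\la| \to \infty$ in $\Omega_\delta.$ Reflecting the sector and using a Phragm\'en--Lindel\"of argument to control the neighbourhood of the real axis, I conclude that $P_{11}(x,\cdot) - 1$ and $P_{12}(x,\cdot)$ are entire, bounded on $\mathbb{C},$ and vanish at infinity; by Liouville's theorem $P_{11}(x,\la) \equiv 1$ and $P_{12}(x,\la) \equiv 0.$ For $N = 0$ the spectra are finite and $M$ is rational, so $P_{11}, P_{12}$ are polynomials whose leading coefficients coincide for $L_0$ and $\tilde L_0$ by the $q$-independence in Lemma \ref{l1}, again forcing $P_{11}\equiv 1, P_{12}\equiv 0$ (cf. Algorithm \ref{alg}).

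Finally, $P_{11} \equiv 1$ and $P_{12} \equiv 0$ give $C(x,\la) = \tilde C(x,\la)$ and $\Phi(x,\la) = \tilde\Phi(x,\la),$ whence $S = \tilde S$ via \eqref{9}. On each segment, inserting $C = \tilde C$ into the Sturm--Liouville equations \eqref{4} yields $(q - \tilde q) C \equiv 0,$ hence $q = \tilde q$ on $(a_{l_k}, b_{l_k}),$ $k = \overline{1,N}.$ At each isolated point $b_l,$ the jump conditions \eqref{jump conditions} together with $C = \tilde C, C^\Delta = \tilde C^\Delta$ reduce to $(q(b_l) - \tilde q(b_l))(a_{l+1}-b_l) C(a_{l+1},\la) = 0,$ and since $C(a_{l+1},\cdot) \not\equiv 0$ we get $q(b_l) = \tilde q(b_l).$ Combining the two cases gives $q = \tilde q$ on $T^{0^2}.$ I expect the main obstacle to be the boundedness step: on a time scale the global solutions are hybrid, growing exponentially in $\rho$ on the segments but only polynomially in $\la$ across the isolated points, so one must verify carefully --- through the $\la$-dependent jump matrices $\alpha^l(\la)$ and the reciprocal structure forced by $W(C,\Phi)\equiv 1$ --- that all growing factors cancel in $P_{11}, P_{12},$ leaving genuinely bounded entire functions uniformly over the whole $\la$-plane.
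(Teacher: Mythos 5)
Your strategy is a \emph{global} variant of the method of spectral mappings, and it is genuinely different from the paper's proof: the paper never works with the global solutions $C,\Phi$ on all of $T$. Instead it peels the time scale from left to right, propagating the truncated characteristic functions ${\cal D}_j^m$ through the jump matrices via \eqref{comp D_0}, \eqref{comp D_1} and \eqref{basic Dkj}, running the Liouville-type argument only with the local solutions $C_m,\Phi_m$ restarted at $a_m$ on each segment (so that the clean asymptotics \eqref{16S}, \eqref{Phi} apply verbatim), and recovering $q(a_m)$ at isolated points directly from the large-$\la$ behaviour of ${\cal D}_0^m/{\cal D}_0^{m+1}$, cf.\ \eqref{comp_q}. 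Within your scheme, the reduction to the single hypothesis $M=\tilde M$, the identities $P_{11}=C\tilde S^{\Delta}-S\tilde C^{\Delta}$ and $P_{12}=S\tilde C-C\tilde S$ (hence entirety of order $1/2$), and the final extraction of $q=\tilde q$ from $C\equiv\tilde C$ on segments and at isolated points are all correct.

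The genuine gap is exactly the step you flag at the end: the claim that $P_{11}(x,\la)=1+o(1)$ and $P_{12}(x,\la)=o(1)$ in $\Omega_\delta$ for $x$ in an interior segment. The asymptotics \eqref{16S} and \eqref{Phi} are proved only for the local solutions $C_{l_k}(\cdot+a_{l_k},\la)$, $\Phi_{l_k}(\cdot+a_{l_k},\la)$ with fresh initial data at $a_{l_k}$; for the global $C$ and $\Phi$ on the $k$-th segment one must carry accumulated prefactors through all preceding segments and gaps, and these prefactors are \emph{not} $q$-independent beyond leading order (already at relative order $\rho^{-1}$ they contain $\omega_j$ and $q(b_l)$, see $v_{kj}$ and $g_k$ in Lemma~\ref{l2}). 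The cancellation you need is therefore not automatic: writing $A,\tilde A$ for the prefactors of $C,\tilde C$ and using the Wronskian normalization $AB=\tilde A\tilde B=\tfrac12$ one finds $P_{11}=\tfrac12\bigl(A/\tilde A+\tilde A/A\bigr)+O(\rho^{-1})$, so the conclusion rests on showing $A/\tilde A=1+O(\rho^{-1})$ via the $q$-independence of the leading coefficients in Lemma~\ref{l1}. Making this rigorous amounts to proving an analogue of Lemma~\ref{l2} for all $x\in T$ and a matching global estimate for $\Phi$ --- a substantial piece of analysis that is present neither in your proposal nor in the paper, whose piecewise induction is designed precisely to avoid it. The case $N=0$ is also only waved at: there $P_{11}(a_l,\cdot)$ and $P_{12}(a_l,\cdot)$ are a priori polynomials of positive degree, and concluding that they are constant requires showing that the rational functions $C(a_l,\la)\tilde\Phi^{\Delta}(a_l,\la)$ etc.\ stay bounded as $\la\to\infty$, not merely comparing leading coefficients.
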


\begin{proof} I. At first, fix $m \in \overline{1, N+M}$ such that $[a_m, b_m] \subseteq T^{0^2}$ and suppose that ${\cal D}_j^m(\la) \equiv \tilde{\cal D}_j^m(\la),$ $j=0,1.$  It follows from (\ref{m10}) that $M_m(\la) \equiv \tilde{M}_m(\la).$ Let us prove that $q$ and $\tilde q$ coincide on $[a_m, b_m].$

First, consider the case $a_m < b_m.$ For $x\in(a_m, b_m)$ we define the functions
$$ P_j(x,\la) = (-1)^j (\Phi_m(x, \la)\tilde C_m^{(2-j)}(x, \la) - \tilde \Phi^{(2-j)}_m(x, \la)C_m(x, \la)), \quad j = 1, 2.
$$
By virtue of the relation $C_m(x,\lambda)\Phi'_m(x,\lambda)-C'_m(x,\lambda)\Phi_m(x,\lambda)\equiv1,$ we have
\begin{equation}\label{P}
P_1(x,\lambda)\tilde C_m(x,\lambda)+P_2(x,\lambda)\tilde C'_m(x,\lambda)=C_m(x,\lambda).
\end{equation}
It also follows from \eqref{16S}, \eqref{Phi} that for each fixed $x\in(a_m,b_m)$
\begin{equation}\label{18}
P_1(x,\lambda)=1+O\Big(\frac1\rho\Big),\quad P_2(x,\lambda)=O\Big(\frac1{\rho^2}\Big),\quad \rho\to\infty,\quad \rho\in\Omega_{\delta}.
\end{equation}
On the other hand, using (\ref{m9}) and the coinsidence of the Weyl functions, we get
$$  P_j(x,\la) = (-1)^j (S_m(x, \la)\tilde C_m^{(2-j)}(x, \la) - \tilde S^{(2-j)}_m(x, \la)C_m(x, \la)), \quad j = 1, 2.
$$
and consequently, for each fixed $x\in(a_m,b_m),$ the functions $P_1(x,\lambda)$ and $P_2(x,\lambda)$ are entire in $\lambda$ of order $1/2.$
By the Phragmen--Lindel\"of theorem and Liouville's theorem, asymptotics (\ref{18}) imply $P_1(x,\lambda)\equiv 1$ and
$P_2(x,\lambda)\equiv 0,$ which along with (\ref{P}) give $C_m(x,\lambda)=\tilde C_m(x,\lambda)$ for $x\in(a_m,b_m)$ and, by continuity, for
$x\in[a_m,b_m].$ Then $q(x)=\tilde q(x)$ for $x\in[a_m,b_m].$

Now let $a_m = b_m.$ The relation $a_m \in T^{0^2}$ means $m < N+M$ and $m < N+M-1$ 
if $\mu_1 = 1.$
If we prove that ${\cal D}_j^m(\la),$ $j=0,1,$ uniquely determine $q(a_m),$ this will yield $q(a_m)=\tilde q(a_m).$ 
Solving system \eqref{system} with respect to ${\cal D}_0^{m+1}(\la)$ and ${\cal D}_1^{m+1}(\la),$ we get
\begin{equation}{\cal D}_0^{m+1}(\la) = \alpha_{11}^m(\la) {\cal D}_0^m(\la) - \alpha^m_{12}(\la) {\cal D}_1^m(\la) = {\cal D}_0^m(\la) - (a_{m+1} - b_m){\cal D}_1^m(\la),\label{comp D_0}\end{equation}
\begin{equation}{\cal D}_1^{m+1}(\la) = \alpha^m_{22}(\la) {\cal D}_1^m(\la) -  \alpha_{21}^m(\la) {\cal D}_0^m(\la).\label{comp D_1}\end{equation}
Thus, by \eqref{comp D_0} the function ${\cal D}_0^{m+1}(\la)$ can be computed.
Let us write the asymptotic formulae for ${\cal D}_0^{m}(\la)/{\cal D}_0^{m+1}(\la),$ $\rho \in \Omega_\delta,$ $\lambda \to \infty.$
There are two possible cases:

Case 1. $a_{m+1} < b_{m+1}.$ Then $l_k=m+1$ for some $k \in \overline{1, N}.$ Use \eqref{Dkjcase1} for $l_k - s = m$ and \eqref{Dkjcase2} for $l_k=m+1:$
\begin{equation*}\frac{{\cal D}_0^m(\la)}{{\cal D}_0^{m+1}(\la)} = 
\alpha^m_{22}(\la) \frac{g_{k}(\rho) + o(\exp(|\tau|d_k)\rho^{-2})}{v_{k0}(\rho) + o(\exp(|\tau|d_k)\rho^{-2})}.\end{equation*}
Dividing the numerator and the denominator on $f_{k0}(\rho)$ and using the estimate $|f_{k0}(\rho)| \ge C^{-1} e^{|\tau| d_k}$ for $\rho \in \Omega_\delta,$ we get by \eqref{v_kj} that
\begin{equation}\frac{{\cal D}_0^m(\la)}{{\cal D}_0^{m+1}(\la)} = 
\alpha^m_{22}(\la) \frac{1+P_1(\rho) + P_2(\rho)  + o(\rho^{-2})}{1+P_1(\rho) + o(\rho^{-2})}, \label{d frac}
\end{equation}
where
$$ P_1(\rho) := (-1)^{\delta_k} \frac{f_{k1}(\rho)}{\rho f_{k0}(\rho)} c_k + \frac{A_{k0}}{\rho^2} + \frac{(-1)^{\delta_k}}{4 \rho^2 f_{k0}(\rho)} \int_0^{d_k} f_{k0}\left(\frac{2t\rho}{d_k}-\rho\right) q'_k(t)\,dt = O\left(\frac{1}{\rho}\right),$$
$$P_2(\rho) :=  \frac{(-1)^{\delta_k}}{\rho(a_{m+1} - a_m)}\left(\frac{f_{k1}(\rho)}{f_{k0}(\rho)} - (-1)^{\delta_k}\frac{c_k}{\rho}\right)= O\left(\frac{1}{\rho}\right).$$
One can also prove that $(-1)^{\delta_k} \frac{f_{k1}(\rho)}{f_{k0}(\rho)} = i + o(\rho^{-1}),$ $\rho \in \Omega_\delta.$ Then from the equalities $$(1+P_1(\rho) + o(\rho^{-2}))^{-1} = 1-P_1(\rho)+P^2_1(\rho)+o(\rho^{-2})$$
and \eqref{d frac} we obtain
\begin{equation*}\frac{{\cal D}_0^m(\la)}{{\cal D}_0^{m+1}(\la)} =(a_{m+1} - a_m)^2 (q(a_m)-\la) - (a_{m+1} - a_m)\rho i + 1 + o(1).
\label{iii}\end{equation*} 
From this formula one can compute the quantity $q(a_m).$ 


Case 2. $a_{m+1} = b_{m+1}.$ Let $N_m$ be the number of the indices in $\{ l_s \}_{s=1}^N$ which are greater then $m.$  Then the functions  ${\cal D}_0^m(\la)$ and ${\cal D}_0^{m+1}(\la)$ are given by \eqref{Dkjcase0} or by \eqref{Dkjcase1} depending on whether $N_m$ is zero or not respectively. Then
$$\frac{{\cal D}_0^m(\la)}{{\cal D}_0^{m+1}(\la)} = \frac{\beta_{2-\delta(N_m, 0),2}^m(\la)}{\beta_{2-\delta(N_m, 0),2}^{m+1}(\la)} \left(1 + o\left(\frac{1}{\rho^2}\right)\right).$$

Applying the formulae of Lemma~\ref{l1}, we obtain
\begin{multline}\frac{{\cal D}_0^m(\la)}{{\cal D}_0^{m+1}(\la)} =(a_{m+1} - a_m)^2 \\ \times\left(-\la +\frac{1}{(a_{m+1} - a_m)^2}  + \frac{1}{(a_{m+1} - a_m)(a_{m+2} - a_{m+1})}+ q(a_m)\right) + o(1).\label{comp_q}\end{multline}
With this relation $q(a_m)$ can be computed.

II. Let us prove by induction that the spectral data uniquely determine the potential $q(x).$ From the assumption of the theorem we have ${\cal D}_j^1(\la) = \tilde{\cal D}_j^1(\la),$ $j=0,1,$ and, by part I, $q \equiv \tilde q$ on $[a_1, b_1].$

Let $m \in \overline{1,N+M-1}$ be such that $[a_{m+1}, b_{m+1}] \subset T^{0^2}$ and \begin{equation}{\cal D}_j^{m}(\la) \equiv \tilde{\cal D}_j^{m}(\la), \; j=0,1, \quad  q(x)= \tilde q(x), \; x\in T_{m+1,0}. \label{coincidence}
\end{equation} In the case $a_{m+1} = b_{m+1}$  we obtain ${\cal D}_j^{m+1}(\la) \equiv \tilde{\cal D}_j^{m+1}(\la),$ $j=0,1,$ from formulae \eqref{comp D_0} and \eqref{comp D_1}. In the case $a_{m+1}<b_{m+1}$ the functions ${\cal D}_0^{m+1}(\la)$ and ${\cal D}_1^{m+1}(\la)$ are solutions of non-degenerate systems \eqref{basic Dkj}. Moreover, by \eqref{coincidence} the functions $\tilde{\cal D}_0^{m+1}(\la)$ and $\tilde{\cal D}_1^{m+1}(\la)$ are solutions of the same system, which yields ${\cal D}_j^{m+1}(\la) \equiv \tilde{\cal D}_j^{m+1}(\la),$ $j=0,1.$ By virtue of part~I, we conclude that $q \equiv \tilde q$ on $[a_{m+1}, b_{m+1}],$ and the theorem is proved by induction.
\end{proof}

Developing the ideas of the method of spectral mapping \cite{[5]}, one can obtain the algorithm for the recovery of the potential. 
Here we restrict ourself the case of $N=0$ (i.e. the case of the difference Sturm--Liouville operator) since it is sufficient to have the computations of Theorem~\ref{t3} for the algorithm.

\begin{alg} \label{alg} Let the functions ${\cal D}_j^1(\la)=\Theta_j(\la),$ $j=0,1,$ be given. To recover $q(a_l),$ $l=\overline{1, M-2},$ for $m=\overline{1, M-2}$ do the following:

1) Construct the function ${\cal D}_0^{m+1}(\la)$ using \eqref{comp D_0} and the known functions  ${\cal D}_0^{m}(\la),$  ${\cal D}_1^{m}(\la).$

2) Find $q(a_m)$ from the relation \eqref{comp_q}. 

3) If $m < M-2,$ construct the function ${\cal D}_1^{m+1}(\la)$ using \eqref{comp D_1} and the found value $q(a_m).$ \end{alg}

\begin{example} Let us consider the time scale $T$ and the characteristic functions $\Theta_j(\la),$ $j=0,1,$ from Example~\ref{e1} and apply Algorithm~\ref{alg}.
First, we compute
 $${\cal D}_0^2(\la) = \Theta_0(\la) - \Theta_1(\la) = 2-\la, \quad  \frac{\Theta_0(\la)}{{\cal D}_0^2(\la)} = -\la +2 + \frac{1}{\la-2}.$$ 
 By formula \eqref{comp_q}, $2 = 2 + q(0)$ and $q(0) = 0.$ With \eqref{comp D_1} we find ${\cal D}_1^2(\la) = (1 - \la) {\cal D}_1^{m}(\la) + \la {\cal D}_0^{m}(\la) = 1-\la.$
 
 Further, ${\cal D}_0^3(\la) = {\cal D}_0^2(\la) - {\cal D}_1^2(\la) = 1$ and ${{\cal D}_0^2(\la)}/{{\cal D}_0^3(\la)} = 2 - \la.$  Applying \eqref{comp_q}, we conclude that $2 = 2 + q(1)$ and $q(1) = 0.$ Thus, the potential $q$ is recovered.
\end{example}

{\bf Acknowledgment.} This is a pre-print of an article published in Results in Mathematics. The final authenticated version is available online at: \\ \url{https://doi.org/10.1007/s00025-020-1171-z}.

This work was supported by Grant 19-71-00009 of the Russian Science Foundation.\\

\end{document}